\newcommand{\NN}{{\mathbb N}}
\newcommand{\RR}{{\mathbb R}}
\renewcommand{\SS}{{\mathbb S}}
\newcommand{\bSS}{\mathcal{BS}_m^N}
\renewcommand{\vec}{\mathrm{vec}}
\newcommand{\lamI}{{\boldsymbol\lambda^{\rm I}}}
\newcommand{\lamII}{{\boldsymbol\lambda^{\rm II}}}
\newcommand{\id}{I}
\newcommand{\JI}{{{J}_{\rm I}}}
\newcommand{\JII}{{{J}_{\rm II}}}
\newcommand{\Mdef}{{\mathcal{M}_{\rm def}}}
\newcommand{\cat}{\mathbin{\Vert}}
\newcommand\refzero{{\hyperref[eq:nearestpoint]{\upshape($\mathrm{P}_{\bartheta}$)}}}
\DeclareMathOperator{\Sym}{bSym} 
\DeclareMathOperator{\symm}{sym} 
\DeclareMathOperator{\rank}{rank}
\DeclareMathOperator{\trace}{tr}
\DeclareMathOperator{\image}{Im}
\DeclareMathOperator{\spann}{span}
\DeclareMathOperator{\val}{val}
\newcommand{\slra}{\texttt{slra}\xspace}
\newcommand{\mosek}{\texttt{Mosek}\xspace}
\newcommand{\cvx}{\texttt{CVX}\xspace}
\newcommand{\ba}{{\boldsymbol a}}
\newcommand{\bb}{{\boldsymbol b}}
\newcommand{\bc}{{\boldsymbol c}}
\newcommand{\bd}{{\boldsymbol d}}
\newcommand{\be}{{\boldsymbol e}}
\newcommand{\bbf}{{\boldsymbol f}\mkern-1mu}
\newcommand{\bg}{{\boldsymbol g}}
\newcommand{\bl}{{\boldsymbol l}}
\newcommand{\bp}{{\boldsymbol p}}
\newcommand{\bs}{{\boldsymbol s}}
\newcommand{\bu}{{\boldsymbol u}}
\newcommand{\bv}{{\boldsymbol v}}
\newcommand{\bw}{{\boldsymbol w}}
\newcommand{\bx}{{\boldsymbol x}}
\newcommand{\by}{{\boldsymbol y}}
\newcommand{\bz}{{\boldsymbol z}}
\newcommand{\blambda}{{\boldsymbol \lambda}}
\newcommand{\btheta}{{\boldsymbol \theta}}
\newcommand{\bzeta}{{\boldsymbol \zeta}}
\newcommand{\barv}{{\boldsymbol{\bar v}}}
\newcommand{\barw}{{\boldsymbol{\bar w}}}
\newcommand{\barx}{{\boldsymbol{\bar x}}}
\newcommand{\bary}{{\boldsymbol{\bar y}}}
\newcommand{\barz}{{\boldsymbol{\bar z}}}
\newcommand{\bartheta}{\boldsymbol{\bar\theta}}
\newcommand{\vone}{\boldsymbol{\tilde{v}}}
\newcommand*\pct{\scalebox{.9}{\%}}
\renewcommand{\subset}{\subseteq}
\renewcommand{\supset}{\supseteq}
\newtheorem{theorem}{Theorem}
\numberwithin{theorem}{section}
\newtheorem{lemma}[theorem]{Lemma}
\newtheorem{assumption}{Assumption}
\theoremstyle{definition}
\newtheorem{example}[theorem]{Example}
\theoremstyle{remark}
\newtheorem{remark}[theorem]{Remark}
\let\oldtabular\tabular
\renewcommand{\tabular}{\footnotesize\oldtabular}
\Crefname{assumption}{Assumption}{Assumptions}
\title[A convex relaxation for the nearest structured rank deficient matrix]{A convex relaxation to compute the \\nearest structured rank deficient matrix}
\date{\today}
\author{Diego Cifuentes}
\address{
Department of Mathematics,
Massachusetts Institute of Technology, Cambridge MA 02139, USA}
\email{diegcif@mit.edu}
\keywords {Convex relaxation, Low rank approximation, Semidefinite programming, Total least squares, Affine structure}
\begin{document}
\maketitle

\begin{abstract}
Given an affine space of matrices $\mathcal{L}$ and a matrix $\Theta\in \mathcal{L}$, consider the problem of computing the closest rank deficient matrix to $\Theta$ on $\mathcal{L}$ with respect to the Frobenius norm.
This is a nonconvex problem with several applications in 
control theory, computer algebra, and computer vision.
We introduce a novel semidefinite programming (SDP) relaxation,
and prove that it always gives the global minimizer of the nonconvex problem in the low noise regime,
i.e., when $\Theta$ is close to be rank deficient.
Our SDP is the first convex relaxation for this problem with provable guarantees.
We evaluate the performance of our SDP relaxation in examples from
system identification, approximate GCD, triangulation, and camera resectioning.
Our relaxation reliably obtains the global minimizer under non-adversarial noise,
and its noise tolerance is significantly better than state of the art methods.

\end{abstract}

\section{Introduction}\label{s:fractional}

Let $k,m,n$ be positive integers with $m\!\leq\! n$.
Given
an affine map $\mathscr{S}: \RR^k\to \RR^{m\times n}$
and a vector $\btheta\in\RR^k$,
we consider the nearest point problem
\begin{equation}\label{eq:stls}
\begin{aligned}
  \min_{\bu\in \mathcal{U}}\quad & \|\bu-\btheta\|^2,
  \quad \text{ with }\quad
  \mathcal{U} := \{\bu\in \RR^k : \mathscr{S}(\bu) \text{ is rank deficient}\},
\end{aligned}
\end{equation}
where we use the $\ell_2$-norm in~$\RR^k$.
More generally, we may consider the weighted $\ell_2$-(semi)norm
$\|\bv\|_W = \sqrt{\bv^T W \bv}$,
where~$W$ is positive (semi)definite.
Denoting the affine space $\mathcal{L} := \image \mathscr{S}$
and the matrix $\Theta := \mathscr{S}(\btheta)$,
we may equivalently write~\eqref{eq:stls} as
\begin{equation}\label{eq:stls1}\tag{1'}
\begin{aligned}
  \min_{{U}\in \RR^{m\times n}}\quad
  & \|{U}-\Theta\|^2,
  \quad \text{ such that }\quad
  {U} \in \mathcal{L}
  \quad\text{ and }\quad
  {U} \text{ is rank deficient},
\end{aligned}
\end{equation}
where we use the (possibly weighted) Frobenius norm in $\RR^{m\times n}$.

We may further study a variant of~\eqref{eq:stls} with complex variables,
but this can be reduced to the real case.
Indeed, a complex matrix $U$ is rank deficient if and only if
the real matrix
$
  \left(\begin{smallmatrix}
    \Re(U) & - \Im(U) \\ \Im(U) & \Re(U)
  \end{smallmatrix}\right)
$
is rank deficient.
So we restrict our attention to the real case.

Problem~\eqref{eq:stls} is sometimes known as \emph{structured total least squares} (STLS)
\cite{Markovsky2007,DeMoor1993,Rosen1996,Lemmerling1999,Lemmerling2001}.
The name comes from the total least squares method
for the linear regression problem $A \bx \approx \bb$,
given by minimizing the Frobenius norm $\|(\Delta A \,|\, \Delta \bb)\|$
subject to $(A{+}\Delta A \,|\, \bb{+}\Delta \bb)$ being rank deficient.
By further imposing an affine constraint on the matrix, we arrive to~\eqref{eq:stls}.

There are several applications of~\eqref{eq:stls} in control, systems theory, and statistics
\cite{Markovsky2007,Markovsky2008,Markovsky2011,Chu2003,DeMoor1993,Kaltofen2006}.
Lemmerling's thesis \cite{Lemmerling1999} gives a long list of concrete applications,
including noisy deconvolution, linear prediction, direction of arrival estimation, information retrieval, approximate realization, and $H_2$~model reduction.
The approximate GCD problem~\cite{Kaltofen2006} from computer algebra is another instance of~\eqref{eq:stls}.
There are also many applications in computer vision.
Specifically, in the area of multi-view geometry
one often has to minimize the following \emph{fractional programming} problem:
\begin{equation}\label{eq:fractional}
\begin{aligned}
  \min_{\bz\in\RR^m,\bu\in\RR^k}\quad & \|\bu - \btheta\|^2,
  \quad \text{ such that }\quad
  u_i = \frac{\ba_i^T \bz}{\bb_i^T \bz} &\text{ for }i=1,\dots,k,
\end{aligned}
\end{equation}
for some given vectors $\ba_i,\bb_i \!\in\! \RR^m$.
After clearing denominators,
this takes the form of~\eqref{eq:stls} with $n\!=\!k$.
In particular, the triangulation, camera resectioning, and homography estimation problems can be phrased as above~\cite{Kahl2008}.

Problem~\eqref{eq:stls} is nonconvex and computationally hard.
It is usually solved in practice with local optimization methods.
Although these methods are typically fast,
there are no guarantees of convergence to the global optimum.
An alternative approach to solve nonconvex problems is to construct a convex relaxation.
This idea has been applied in several other areas,
e.g., \cite{Yang2020,Candes2009,Rosen2019,Iguchi2017,Cosse2017},
often leading to global optimality results.
We follow this approach in this paper and propose a novel convex relaxation for~\eqref{eq:stls}.

An important advantage of convex relaxations over local optimization methods is \emph{certifiability}.
Even if a local optimization method successfully finds the global minimum,
there is no way for the user to verify this.
On the other hand, convex relaxations may leverage convex duality to provide certificates of global optimality.
Guaranteed performance is a critical feature in certain applications;
its role in robotics is discussed in \cite[Appx.A]{Yang2020}.
Convex relaxations might also be combined with local optimization methods,
serving for a posteriori verification of candidate solutions.

Our convex relaxation, presented in~\eqref{eq:sdp},
relies on \emph{semidefinite programming} (SDP),
and can be solved in polynomial time with interior points methods~\cite[\S6]{BenTal2001}.
The optimal value of our relaxation is always a lower bound on the global minimum of~\eqref{eq:stls}.
We say that the relaxation is \emph{exact} (or tight) if the optimal solution of the SDP is a rank-one matrix.
In such a case the optimal value of the SDP agrees with the optimal value of~\eqref{eq:stls}
and we can recover the global minimizer of~\eqref{eq:stls} from the SDP solution.

We evaluated the performance of our SDP relaxation in the following applications:
approximate realization (control),
approximate GCD (computer algebra),
triangulation and camera resectioning (computer vision).
Our experiments reveal that our SDP is often exact (and certifiably optimal) under non-adversarial noise.
Furthermore, the SDP is significantly more resilient to noise than state of the art methods.
Preliminary experimental results suggest that the SDP is also robust against missing data (accommodated by using a degenerate weight matrix~$W$).

\subsection*{Exactness under low noise}

Our main technical contribution is to show that our SDP relaxation of~\eqref{eq:stls} is always {exact} when $\btheta$ is \emph{sufficiently close} to the set~$\mathcal{U}$.
In statistical estimation problems the point $\btheta$ often represents a noisy sample from~$\mathcal{U}$,
so the distance from $\btheta$ to $\mathcal{U}$ corresponds to the magnitude of the noise.
We hence refer to this setting as the \emph{low noise regime}.
This gives a theoretical explanation for the practical success of the relaxation in our experiments.

More precisely, we show that if ${\bartheta}\in \mathcal{U}$ satisfies a mild geometric condition,
then the SDP relaxation is exact as $\btheta\to {\bartheta}$.
The geometric condition we use is \emph{non-tangentiality},
as introduced in~\cite{Andersson2013}.
Two manifolds $\mathcal{M}_1, \mathcal{M}_2$ meet {non-tangentially} at a point~$\bp$ if
\begin{align}\label{eq:nontangential}
  \mathcal{M}_1 \!\cap\! \mathcal{M}_2 \text{ is a manifold nearby } \bp
  \quad\text{ and }\quad
  T_{\bp}(\mathcal{M}_1 \!\cap\! \mathcal{M}_2) =
  T_{\bp}(\mathcal{M}_1) \cap T_{\bp}(\mathcal{M}_2),
\end{align}
where $T_{\bp}$ denotes the tangent space at~$\bp$.
Informally, this condition means that the two manifolds have a positive angle in directions perpendicular to $\mathcal{M}_1 \cap \mathcal{M}_2$.
As shown in~\cite{Andersson2013}, non-tangentiality is a weaker condition than the more common notion of transversality
(which means that
$T_{\bp}(\mathcal{M}_1) \!+\! T_{\bp}(\mathcal{M}_2)$ is the whole space).

\begin{assumption} \label{ass:transversality}
  Let
  $\Mdef := \{A : \rank A \!=\! m{-}1 \} \subset \RR^{m\times n}$
  be the manifold of matrices with rank deficiency~one.
  Assume that $\Mdef$ meets the affine space $\mathcal{L} \!=\! \image \mathscr{S}$
  non-tangentially at the point~$\mathscr{S}({\bartheta})$.
\end{assumption}

\begin{theorem}\label{thm:lownoise}
  Let ${\bartheta}\in \mathcal{U}$ such that \Cref{ass:transversality} holds.
  Then the SDP relaxation~\eqref{eq:sdp} correctly recovers the global minimizer of~\eqref{eq:stls} whenever $\btheta$ is close enough to ${\bartheta}$.
\end{theorem}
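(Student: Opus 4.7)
The plan is to build a primal-dual certificate for the SDP at the noise-free point $\btheta=\bartheta$, and then extend exactness to a neighborhood by a perturbation argument.

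First I would analyze problem~\eqref{eq:stls} locally. Under \Cref{ass:transversality}, non-tangentiality forces $\mathcal{L}\cap\Mdef$ to be a smooth submanifold near $\mathscr{S}(\bartheta)$; pulling back through $\mathscr{S}$ gives that $\mathcal{U}$ is smooth near $\bartheta$. Consequently, for $\btheta$ in a small neighborhood of $\bartheta$ the nonconvex problem~\eqref{eq:stls} has a unique global minimizer $\bu^*(\btheta)$ that depends smoothly on $\btheta$, obtained as the nearest point (orthogonal projection) onto $\mathcal{U}$. The natural candidate primal solution of the SDP is a rank-one lift built from $\bu^*(\btheta)$ together with an associated left null vector of $\mathscr{S}(\bu^*(\btheta))$; proving \Cref{thm:lownoise} amounts to showing that this rank-one point is SDP-optimal.

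Next I would write down the KKT conditions of the SDP and attempt to produce a dual variable $\Lambda$ that is dual feasible, satisfies the stationarity equations, and is complementary to the primal candidate. At the noise-free point $\btheta=\bartheta$ the objective vanishes, so the construction of $\Lambda$ reduces to a finite-dimensional linear system whose solvability depends on the relative position of $T_{\mathscr{S}(\bartheta)}\mathcal{L}$ and $T_{\mathscr{S}(\bartheta)}\Mdef$. Non-tangentiality is exactly the structural property needed: the tangent space of the intersection equals the intersection of tangent spaces, which translates to a rank condition on the linearized KKT operator, and from this one can exhibit a $\Lambda$ of maximum possible rank, so that strict complementarity holds with the rank-one primal.

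Finally, I would carry out the perturbation argument. The KKT system at the candidate primal-dual pair $(X^*(\btheta),\Lambda^*(\btheta))$ depends smoothly on $\btheta$, and strict complementarity, combined with the non-degeneracy supplied by non-tangentiality, makes the Jacobian of this system nonsingular at $\btheta=\bartheta$. The implicit function theorem then yields a smooth branch of primal-dual solutions defined on a neighborhood of $\bartheta$; since positive semidefiniteness, rank, and strict complementarity are open conditions, they persist under small perturbations, so the rank-one lift of $\bu^*(\btheta)$ remains SDP-optimal throughout a neighborhood of $\bartheta$.

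The main obstacle I anticipate is the noise-free construction of $\Lambda$: translating the geometric non-tangentiality condition into an algebraic statement that produces a strictly complementary dual certificate is the real work, since the rank-deficiency constraint must be linearized in a basis adapted to the left null vector of $\mathscr{S}(\bartheta)$, and the dual variable has to be fabricated from the obstructions to that linearization. Once such a certificate is in hand, the perturbation step is largely mechanical and follows from standard convex-optimization stability arguments.
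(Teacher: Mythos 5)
Your overall strategy---certify exactness at $\btheta=\bartheta$ with a strictly complementary dual solution and then propagate it by a perturbation argument---is the right spirit, and it is essentially what happens \emph{inside} the stability theorem of \cite{Cifuentes2017stability} that the paper invokes as a black box (\Cref{thm:stability}). The paper's own work consists of verifying that theorem's four hypotheses: ACQ for the lifted variety $\mathcal{X}_{\bartheta}$, smoothness of the parametric family, the branch-point condition, and the restricted Slater condition. Measured against that, your proposal has two genuine gaps. First, the dual certificate. At $\bartheta$ the trivial dual point $\gamma=0$, $\mu=0$, $\Sigma=0$ is already optimal, but its slack matrix $\diag(0,I_{mk})$ has rank $N-m$, not $N-1$; producing a strictly complementary certificate means finding multipliers whose $zz$-block is positive definite on $(\barz)^{\perp}$, which is exactly the restricted Slater condition of \Cref{thm:sufficientcondition}. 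This is the hard computation of the paper (Lemmas \ref{thm:limitJ}, \ref{thm:J1kerJ2}, \ref{thm:distincteigen0}, together with the partition in \Cref{tab:multipliers}), and you only assert it follows from non-tangentiality. In fact it does not come from non-tangentiality: the paper derives it from the weaker \Cref{ass:spanai} (the columns $\ba_i$ span $(\barz)^{\perp}$, i.e.\ $\rank\mathscr{S}(\bartheta)=m-1$). Non-tangentiality is needed for something you do not address at all, namely the constraint qualification (\Cref{thm:ACQ}): without ACQ the Lagrange-multiplier space $\Lambda=\{\blambda:\blambda^{T}\nabla\mathbf{h}(\barx)=0\}$ need not contain any certificate matching the geometry of $T_{\barx}(\mathcal{X})$, and the whole certificate construction can fail even when the geometric picture looks fine (cf.\ \Cref{ex:ACQfails}).

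Second, the perturbation step is not ``largely mechanical.'' The lifted SDP has $1+nN+|L|$ linear constraints on an $N\times N$ matrix with $N=(k{+}1)m$, so the constraints are massively redundant and the dual optimal face is a positive-dimensional affine set. Dual nondegeneracy fails, the Jacobian of the full KKT system is singular at $(\bartheta,\barx)$, and the implicit function theorem cannot be applied to it naively; you would have to quotient out the degenerate directions or otherwise select a smooth branch of dual solutions, which is precisely the technical content of the stability framework the paper cites. Relatedly, your opening claim that the global minimizer of~\eqref{eq:stls} is the unique smooth projection onto the local branch of $\mathcal{U}$ needs a separate (easy but necessary) argument that far-away components of $\mathcal{U}$ stay at distance bounded below. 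To turn your sketch into a proof you would need to either reprove the stability theorem or, as the paper does, reduce to verifying its hypotheses---of which the restricted Slater condition and ACQ are the substantive ones.
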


We will show in \Cref{s:nontangential} that the role of \Cref{ass:transversality}
is to ensure that the intersection $\Mdef \cap \mathcal{L}$ is a regularly defined smooth manifold.
Due to the pivotal role of regularity (constraint qualifications) in optimization,
the non-tangentiality condition becomes essential for any (local or global) optimization method for~\eqref{eq:stls}.
For instance, \Cref{ass:transversality} is needed to get linear convergence in alternating projections~\cite{Andersson2013}
and to prove quadratic convergence of Newton's method~\cite{Schost2016}.

Our proof of \Cref{thm:lownoise} relies on the theory of stability of SDP relaxations recently developed in~\cite{Cifuentes2017stability}.
More generally, the proof can be extended to the case of weighted $\ell_2$-norms defined by a positive definite weight matrix.

\subsection*{Related work}

Problem~\eqref{eq:stls} has appeared in several works, often by the name STLS;
see \cite{Markovsky2007,DeMoor1993,Rosen1996,Lemmerling1999,Lemmerling2001} and the references therein.
All previous methods rely on local optimization.
Due to nonconvexity, there are no guarantees to get the global minimum.

Recent work has shifted into a generalization of~\eqref{eq:stls}
known as \emph{structured low rank approximation} (SLRA),
given by
$
  \min_{\bu} \{ \|\bu\!-\!\btheta\|^2 :
  \rank \mathscr{S}(\bu) \!\leq\! r\},
$
for some given~$r$.
Hence,~\eqref{eq:stls} corresponds to the case $r \!=\! m{-}1$.
The SLRA problem has been studied extensively~\cite{Chu2003,Schost2016,Ottaviani2014}, notably by Markovsky~\cite{Markovsky2011,Markovsky2008}.
As before, most practical algorithms rely on local optimization,
see e.g.,~\cite[\S3]{Markovsky2011}.

Convex optimization methods have been applied to SLRA.
A common heuristic method consists in replacing the rank constraint by the convex constraint $\|U\|_* \!\leq\! t$,
where $\|\cdot\|_*$ is the nuclear norm and $t$ is a tunable parameter.
This leads to a tractable SDP, see~\cite{Fazel2002}.
The rationale behind this heuristic is that the nuclear norm is the convex envelope of the rank function.
But there are no theoretical guarantees for the SLRA problem.
The resulting matrix $U$ is not necessarily optimal, may not satisfy the rank constraint, and optimality certificates cannot be recovered.

Another convex relaxation for SLRA was proposed in~\cite{Larsson2016},
see also~\cite{Carlsson2019}.
It relies on a characterization of the convex envelope for
$\|U{-}\Theta\|^2 + \mathbf{1}(\rank U \!\leq\! r)$,
where $\mathbf{1}$ denotes the indicator function.
This gives a better approximation of the SLRA problem than the nuclear norm,
as it takes into account the $\ell_2$-objective and the rank~$r$
(but ignores the affine structure~$\mathcal{S}$).
This relaxation allows to derive optimality certificates.
However, no provable guarantees or complexity estimates are known.

To the best of our knowledge, our proposed SDP is the first convex relaxation for problem~\eqref{eq:stls} with provable guarantees.
It is also the first polynomial-time method for~\eqref{eq:stls} that provides global optimality certificates.
Special instances from computer vision have received more attention~\cite{Kahl2007,Aholt2012}, as we will elaborate in \Cref{s:vision}.

A closely related problem to SLRA is the \emph{rank minimization problem} (RMP),
given by
$
\min_{\bu} \{ \|\bu\!-\!\btheta\|^2 + t \rank \mathscr{S}(\bu) \}
$
for some parameter $t$.
RMP has been widely studied in the compressing sensing community.
Several theoretical guarantees are known for both convex~\cite{Recht2010} and nonconvex methods~\cite{Jain2010}.
Both cases critically rely on the restricted isometry property (RIP),
which gives strong limitations on the affine structure~$\mathscr{S}$.
Note that RIP does not make sense in the context of SLRA, 
since it implies that there is a unique vector $\bu$ for which $\rank \mathscr{S}(\bu) \leq r$.

Since~\eqref{eq:stls} is a polynomial optimization problem,
the sum-of-squares (SOS) method~\cite{Blekherman2013} provides a hierarchy of SDP relaxations with increasing power and complexity.
Our proposed relaxation involves a positive semidefinite (PSD) matrix of size $(k{+}1)m$, and lies in between the first and the second level of the SOS hierarchy, whose sizes are $k{+}m{+}1$ and $\binom{k{+}m{+}2}{2}$.
Though SOS relaxations often perform well in practice, there are very few theoretical results explaining this behavior.
This paper contributes in this direction, by showing that our relaxation is always exact in the low noise regime.

\subsection*{Structure of the paper}

In \Cref{s:shor} we review semidefinite programming relaxations of quadratically constrained quadratic programs.
In \Cref{s:rankone} we derive our SDP relaxation for~\eqref{eq:stls}.
In \Cref{s:nontangential} we elaborate on the connection between non-tangentiality, smoothness, and constraint qualifications.
In \Cref{s:proof} we prove \Cref{thm:lownoise}.
We conclude in \Cref{s:applications} with experimental results.

\subsection*{Notation}

We use lowercase boldface for vectors,
uppercase italics for matrices,
and uppercase calligraphic font for sets.
All vectors are column vectors.
The symbol ``$\cat$'' denotes vertical concatenation of vectors or matrices.
In particular, if $\bz \in \RR^m$ and $\bv \in \RR^k$
then $(\bz \cat \bv) \in \RR^{m+k}$.
We use the $\ell_2$-norm for vectors and the Frobenius norm for matrices
unless stated otherwise.
The set $\{1,2,\dots,n\}$ is denoted as $[n]$.

\section{Shor relaxation of a QCQP}\label{s:shor}

A \emph{quadratically constrained quadratic program} (QCQP) has a well studied SDP relaxation.
We review this relaxation in this section.

Let $\SS^N$ denote the space of real symmetric matrices of size~$N\times N$.
Given matrices $C,A_1,\dots,A_n \in \SS^N$ and a vector $\bb\in \RR^n$,
consider the \emph{homogeneous} QCQP:
\begin{equation}\label{eq:qcqp}
  \tag{P}
  \begin{aligned}
    \min_{\bx\in \RR^N }\quad \bx^T C \bx
    \quad\text{ such that }\quad
    \bx^T A_i \bx=b_i\; \text{ for } i\in [n].
  \end{aligned}
\end{equation}
By homogeneous we mean that the cost function and constraints do not involve linear terms in~$\bx$.
We can rephrase the problem in terms of the matrix $X \!=\! \bx \bx^T \!\in\! \SS^N$.
Observe that $X$ is positive semidefinite (PSD), denoted $X \!\succeq\! 0$,
and has rank one.
Also note that $\bx^T C \bx = C \!\bullet\! X$,
where $C \!\bullet\! X := \trace(C X)$ denotes the trace inner product.
Then the QCQP is equivalent to
\begin{equation*}
\begin{aligned}
  \min_{X \in \SS^N }\quad C \bullet X
  \quad\text{ such that }\quad
  A_i \bullet X =b_i\; \text{ for } i\in [n],
  \quad
  X \succeq 0,
  \quad
  X \text{ rank-one}.
\end{aligned}
\end{equation*}

The \emph{Shor SDP relaxation} of~\eqref{eq:qcqp} is obtained by dropping the rank-one constraint:
\begin{equation}\label{eq:shor}
  \tag{P\textsuperscript{*}}
  \begin{aligned}
    \min_{X\in \SS^N }\quad C \bullet X
    \quad\text{ such that }\quad
    A_i \bullet X =b_i\; \text{ for } i\in [n],
    \quad
    X \succeq 0.
  \end{aligned}
\end{equation}
The conic dual of the above problem is the following SDP:
\begin{equation}\label{eq:dual}
  \tag{D}
  \begin{aligned}
    \min_{\blambda\in \RR^n }\quad \bb^T \blambda
    \quad\text{ such that }\quad
    C - \sum\nolimits_i \lambda_i A_i  \succeq 0.
  \end{aligned}
\end{equation}
It can be shown that~\eqref{eq:dual} is also the Lagrangian dual of~\eqref{eq:qcqp}.
The optimal values of the above problems always satisfy
$ \val\text{\eqref{eq:qcqp}} \!\geq\! \val\text{\eqref{eq:shor}} \!\geq\! \val\text{\eqref{eq:dual}} $.
In particular, for any feasible dual vector $\blambda$
we have that $\bb^T \blambda$ is a lower bound for $\val\text{\eqref{eq:qcqp}}$.

The SDP relaxation is \emph{exact} if
$ \val\text{\eqref{eq:qcqp}} \!=\! \val\text{\eqref{eq:shor}} \!=\! \val\text{\eqref{eq:dual}} $
and the minimizer $X^*$ of~\eqref{eq:shor} has rank one.
In this case,
the minimizer $\bx^*$ of~\eqref{eq:qcqp} can be obtained by factorizing~$X^*$,
and the corresponding dual solution $\blambda^*$ gives an \emph{optimality certificate} for~$\bx^*$.

We proceed to the \emph{inhomogeneous} case.
Given $C,A_i \!\in\! \SS^{N\!}$, $\bc,\ba_i \!\in\! \RR^{N\!}$, $\bb \!\in\! \RR^n$, consider
\begin{equation}\label{eq:qcqp_inhom}
\begin{aligned}
  \min_{\by\in \RR^N }\quad \by^T C \by + 2 \bc^T \by
  \quad\text{ such that }\quad
  \by^T A_i \by + 2 \ba_i^T \by =b_i\; \text{ for } i\in [n].
\end{aligned}
\end{equation}
We can {homogenize} the problem by introducing an auxiliary variable~$x_0$.
Let
$\bx \!=\! ( x_0 \Vert \by)  \!\in\! \RR^{N+1}$
be obtained by prepending $x_0$ to~$\by$.
We claim that~\eqref{eq:qcqp_inhom} is equivalent to the following homogeneous QCQP:
\begin{equation}\label{eq:qcqp_hom}
\begin{gathered}
  \min_{ \bx\in \RR^{N+1} }\quad
  \bx^T \tilde{C} \bx
  \quad\text{ such that }\quad
  x_0^2 = 1, \quad
  \bx^T \tilde{A}_i \bx = b_i
  \;\text{ for } i\in [n].
\end{gathered}
\end{equation}
where
$\tilde{C} \!:=\! \left(\begin{smallmatrix} 0 & c^T \\ c & C \end{smallmatrix}\right)$
and
$\tilde{A}_i \!:=\! \left(\begin{smallmatrix} 0 & \ba_i^T \\ \ba_i & A_i \end{smallmatrix}\right)$.
Note that $x_0 \!=\! \pm 1$ by the first constraint.
We may further assume that $x_0 \!=\! 1$,
as the problem is invariant under sign changes.
Plugging in $x_0 \!=\! 1$ in~\eqref{eq:qcqp_hom} leads to~\eqref{eq:qcqp_inhom},
as we claimed.

\section{Derivation of our SDP relaxation}\label{s:rankone}
In this section we present our SDP relaxation for~\eqref{eq:stls}.
Our strategy consists in phrasing~\eqref{eq:stls} as a QCQP,
and considering the associated Shor relaxation.

\subsection{The naive attempt}

For convenience, we first center the problem at zero with the change of variables
$\bv = \bu \!-\! \btheta$.
Denoting
$\mathscr{S}_{\btheta}(\bv) := \mathscr{S}(\bv {+} \theta)$,
problem~\eqref{eq:stls} becomes
\begin{equation*}
\begin{aligned}
  \min_{\bv\in \RR^k}
  \quad & \|\bv\|^2
  \quad \text{ such that }\quad
  \mathscr{S}_{\btheta}(\bv) \text{ is rank deficient}.
\end{aligned}
\end{equation*}
We can rewrite the above as a QCQP
by introducing a variable $\bz\in \RR^m$ representing a unit vector in the left kernel of~$\mathscr{S}_{\btheta}(\bv)$.
This leads to the inhomogeneous QCQP:
\begin{equation}\label{eq:kernelrepr}
\begin{aligned}
  \min_{\bz\in\RR^m, \bv\in \RR^k }\quad \bv^T \bv
  \quad\text{ such that }\quad
  \bz^T \bz = 1,\quad
  \bz^T\mathscr{S}_\btheta(\bv)=0.
\end{aligned}
\end{equation}
The above is known as the kernel representation~\cite{Markovsky2008}.
Although we can derive an SDP relaxation from~\eqref{eq:kernelrepr},
it does not reveal any information about the original problem.

\begin{lemma} \label{thm:firstrelaxation}
  The optimal value of the Shor relaxation of~\eqref{eq:kernelrepr} is zero for any~$\btheta$.
\end{lemma}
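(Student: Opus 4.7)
The plan is to exhibit an explicit positive semidefinite matrix that is feasible for the Shor relaxation of~\eqref{eq:kernelrepr} and attains objective value zero. Since the original objective $\|\bv\|^2$ is nonnegative, and since any PSD~$X$ has a PSD principal submatrix on the $\bv$-block (hence nonnegative trace on that block), zero is a trivial lower bound on the relaxation; producing such a feasible point thus settles the claim.

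First I would homogenize~\eqref{eq:kernelrepr} as in \Cref{s:shor}: introduce $x_0$ with $x_0^2 = 1$ and work with $\bx := (x_0 \cat \bz \cat \bv) \in \RR^{1+m+k}$. The Shor lift is a PSD matrix $X \in \SS^{1+m+k}$, which I block-partition according to the splitting $(x_0, \bz, \bv)$ as
\begin{equation*}
X \;=\; \begin{pmatrix} 1 & \bp^T & \ast \\ \bp & Z & Y \\ \ast & Y^T & V \end{pmatrix},
\end{equation*}
with $\bp \in \RR^m$, $Z \in \SS^m$, $Y \in \RR^{m \times k}$, $V \in \SS^k$. The key translation step is to read off the constraints in these blocks. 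Writing $\mathscr{S}_\btheta(\bv) = \Theta + \mathscr{S}_0(\bv)$ with $\Theta := \mathscr{S}(\btheta)$ and $\mathscr{S}_0$ the linear part, the homogenized form of $\bz^T \mathscr{S}_\btheta(\bv) = 0$ is the collection of quadratic equations $x_0 \bz^T \Theta e_j + \bz^T \mathscr{S}_0(\bv) e_j = 0$ for $j=1,\dots,n$. In the $X$-variables these are $n$ linear equations involving only the entries of $\bp$ (from the $x_0\bz$-term $\bz^T\Theta$) and of $Y$ (from the bilinear $\bz\bv$-term $\bz^T\mathscr{S}_0(\bv)$); neither $V$ nor the $x_0\bv$-block appears. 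The norm constraint $\bz^T\bz = 1$ becomes $\trace Z = 1$, and the objective is $\trace V$.

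I would then propose the block-diagonal candidate $\bp = 0$, $Y = 0$, $V = 0$, $\ast = 0$, and $Z = m^{-1} I_m$: it is PSD, satisfies $\trace Z = 1$, and collapses every bilinear constraint to $0=0$, while $\trace V = 0$. There is no serious obstacle beyond this bookkeeping. The one conceptually revealing point, which I would emphasize in the write-up, is that setting $V = 0$ automatically forces the entire $\bv$-row and column of $X$ to vanish by PSDness, which decouples the only constraint that actually involves $\btheta$ (through $\Theta$) and leaves $Z$ essentially unconstrained. This is exactly why the kernel-representation relaxation is oblivious to~$\btheta$ and motivates the more refined construction developed in the rest of \Cref{s:rankone}.
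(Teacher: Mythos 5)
Your proposal is correct and is essentially identical to the paper's own proof: both homogenize, block-partition the PSD matrix, observe that the affine-structure constraints touch only the off-diagonal $(x_0,\bz)$- and $(\bz,\bv)$-blocks (equivalently, that the constraint matrices have zero diagonal blocks because $\bz^T\mathscr{S}_\btheta(\bv)$ is bilinear), and exhibit the feasible point with $X_{zz}=\tfrac{1}{m}\id_m$ and all other blocks zero. The added remark explaining why the relaxation is oblivious to $\btheta$ is a nice touch but does not change the argument.
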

\begin{proof}
  The homogenized problem (see~\eqref{eq:qcqp_hom}) has variables
  $x_0 \!\in\! \RR$, $\bz \!\in\! \RR^m$, $\bv \!\in\! \RR^k$.
  We may partition the PSD matrix of the Shor relaxation accordingly:
  $$ X =
  \left(\begin{smallmatrix}
      X_{00}&X_{0z}&X_{0v}\\
      *&X_{zz}&X_{zv}\\
      *& *&X_{vv}.
  \end{smallmatrix}\right) \in \SS^{1+m+k}.
  $$
  We omitted the lower triangular blocks, as $X$ is symmetric.
  The Shor relaxation is
  \begin{equation*}
    \begin{aligned}
    \min_{X\in \SS^{1+m+k}}\;\;
    \trace (X_{vv})
    \quad\text{ s.t. }\quad
    X_{00} = 1, \;\;
    \trace (X_{zz}) = 1, \;\;
    \tilde{A}_i \!\bullet\! X = 0 \;\text{ for } i \!\in\! [n], \;\;
    X \succeq 0,
    \end{aligned}
  \end{equation*}
  where $\tilde{A}_i \in \SS^{1+m+n}$ is the constraint matrix associated to the $i$-th entry of $\bz^T\mathscr{S}_\btheta(\bv)$.
  We may partition the matrix $\tilde{A}_i$ in blocks as above.
  Observe that all its diagonal blocks are zero
  since the product $\bz^T\mathscr{S}_\btheta(\bv)$ is bilinear.
  Let $X^*$ be the matrix with blocks
  $$
  X_{00}^* = 1,
  \quad
  X^*_{zz} = \tfrac{1}{m} \id_m,
  \quad
  X^*_{vv} = 0,
  \quad
  X^*_{0z} = 0,
  \quad
  X^*_{0v} = 0,
  \quad
  X^*_{zv} = 0.
  $$
  Note that $X^*$ is feasible and $\trace(X_{vv}^*)=0$.
  So the optimal value of the SDP is zero.
\end{proof}

\subsection{Lifting the problem}
Our first QCQP formulation, given in~\eqref{eq:kernelrepr}, did not succeed.
We will derive another equivalent QCQP
in a higher dimensional space,
which gives rise to a better SDP relaxation.
Let
$\vone := (1 \Vert \bv) \in\RR^{k+1}$
be obtained by prepending a one to~$\bv$,
and consider the {Kronecker products}
\begin{align*}
  \by := {\bv}\otimes \bz = \vec(\bz{\bv}^T)\in \RR^{m k}, \quad
  \bx := (\bz \cat \by ) =
  \vone\otimes \bz \in \RR^{N}, \quad N := (k+1)m.
\end{align*}
We will restate problem~\eqref{eq:kernelrepr} in terms of the vector~$\bx \!\in\! \RR^N$.

Since $\mathscr{S}_{\btheta} : \RR^k \to \RR^{m\times n}$ is an affine map,
it can be written as
\begin{gather}\label{eq:Stheta}
  \mathscr{S}_\btheta(\bv)
  \,:=\,
  \mathscr{S}(\bv {+} \btheta)
  \,=\,
  A_{\btheta} + \sum_{j=1}^k v_j\, B_j
  \quad \text{ for some } \quad
  A_{\btheta}, B_1,\dots,B_k\in \RR^{m\times n},
\end{gather}
where only the first matrix depends on~$\btheta$.
Let $S_{\btheta} \!\in\! \RR^{N\times n}$ and
$\bs^{\btheta}_1,\dots,\bs^{\btheta}_n \!\in\! \RR^N$ such that
\begin{align*}
  \left(\begin{matrix}
      \bs^{\btheta}_1 & \bs^{\btheta}_2 & \cdots & \bs^{\btheta}_n
  \end{matrix}\right)
  \,:=\,
  S_{\btheta}
  \,:=\,
  (A_{\btheta} \cat B_1 \cat \cdots \cat B_k)
  \,\in\, \RR^{N\times n},
\end{align*}
so the horizontal concatenation of $\{\bs^{\btheta}_i\}$ equals the vertical concatenation of~$\{A_{\btheta},B_j\}$.
Observe that
$  \bv^T \bv \!=\! (\bv^T \bv) (\bz^T \bz) \!=\! \by^T \by$
and
$  \bz^T \mathscr{S}_\theta(\bv) \!=\! \bx^T S_{\btheta}$.
So we can rewrite~\eqref{eq:kernelrepr} as
\begin{equation}\label{eq:rankone0}
\begin{aligned}
  \min_{\bx = (\bz \Vert \by)}\;
  \by^T \by
  \quad\text{s.t.}\quad
  \bz^T \bz = 1, \quad
  \bx^T \bs^{\btheta}_i \!=\! 0
  \;\;\forall i \!\in\! [n], \quad
  \bx \text{ is a Kronecker product}.
\end{aligned}
\end{equation}

\begin{example}\label{ex:simple}
  Let $k\!=\!1$, $m\!=\!n\!=\!2$, and let
  $\mathscr{S}_{\theta} : \RR\!\to\! \RR^{2\times 2}$,
  $v\mapsto \left(\begin{smallmatrix}
      1 & v+\theta \\ v+\theta & v+\theta
  \end{smallmatrix}\right)$.
  Then
  \begin{align*}
    A_{\theta} =
    \left(\begin{smallmatrix}
        1 & \theta\\
        \theta & \theta\\
    \end{smallmatrix}\right),
    \quad
    B_1 =
    \left(\begin{smallmatrix}
        0 & 1\\
        1 & 1\\
    \end{smallmatrix}\right),
    \quad
    \bs^{\theta}_1 = (1, \theta, 0, 1)^T,
    \qquad
    \bs^{\theta}_2 = (\theta, \theta, 1, 1)^T.
  \end{align*}
\end{example}

Problem~\eqref{eq:rankone0} is not a QCQP
as it has linear equations and a Kronecker product constraint.
We may convert the linear equations into quadratics in a simple way.
Let $\{\be_j\}_{j=1}^N$ be the canonical basis of $\RR^N$.
Then
\begin{align*}
  \bx^T \bs^{\btheta}_i = 0
  \qquad\iff\qquad
  (\bx^T \bs^{\btheta}_i)\,x_j = \bx^T \bs^{\btheta}_i \be_j^T \bx = 0
  \;\;\forall j \!\in\! [N].
\end{align*}
Hence, \eqref{eq:rankone0} is equivalent to
\begin{equation}\label{eq:rankone1}
\begin{aligned}
  \min_{\bx = (\bz \Vert \by)}\;
  \by^T \by
  \quad\text{s.t.}\quad
  \bz^T \bz \!=\! 1, \quad
  \bx^T \bs^{\btheta}_i \be_j^T \bx \!=\! 0
  \;\;\forall i \!\in\! [n], j \!\in\! [N], \quad
  \bx \text{ is a Kron.\,prod.}
\end{aligned}
\end{equation}

The Kronecker product condition means that $\bx$ is the vectorization of an $m\times (k{+}1)$ matrix of rank one
($\bx = \vec(\bz \vone^T)$).
Recall that a matrix is rank-one if and only if all its $2\times 2$ minors vanish.
If we think of $\bx$ as a matrix, we may index its entries by pairs $(i,j)$ with $0 \!\leq\! i \!\leq\! k$, $1 \!\leq\! j \!\leq\! m$,
so that $x_{(i,j)} = \tilde v_i\, z_j$.
Then
\begin{equation} \label{eq:indices}
\begin{gathered}
  \bx \text{ is a Kronecker product}
  \quad \iff \quad
  x_{l_1}x_{l_2} = x_{l_3}x_{l_4} \;\text{ for all } l=(l_1,l_2,l_3,l_4) \in L, \\
  L := \{l=\bigl((i_1,j_1),(i_2,j_2),(i_1,j_2),(i_2,j_1)\bigr) :\,
  0\!\leq\! i_1\!<\!i_2\!\leq\! k,\, 1\leq\! j_1\!<\!j_2\!\leq\! m\}.
\end{gathered}
\end{equation}
Injecting the above equations in~\eqref{eq:rankone1} lead to a QCQP formulation.

\subsection{Block symmetry}

The quadratic equations in~\eqref{eq:indices} have a nice interpretation in terms of the matrix $\bx \bx^T \!\in\! \SS^N$.
They say that this matrix has repeated entries:
$[ \bx \bx^T ]_{l_1,l_2} \!=\! [ \bx \bx^T ]_{l_3,l_4}$ for $l \!\in\! L$.
The pattern of repeated entries has a very special structure.
We say that a matrix $X \!\in\! \SS^N$ is $m$-\emph{block symmetric}
if when divided into $m \!\times\! m$ blocks
we have that all $(N/m)^2$ blocks are symmetric ($m$ must divide~$N$).
Then
\begin{align*}
  (\text{ equations~\eqref{eq:indices} hold })
  \quad\iff\quad
  \bx \bx^T \in\,
  \bSS := \{ X \!\in\! \SS^N : X \text{ is $m$-block symmetric} \}.
\end{align*}

\begin{example}\label{ex:simple1}
  Let $k\!=\!1$, $m\!=\!2$, $N \!\!= 4$.
  We may index the entries of $\bx \!\in\! \RR^4$ as
  $(x_{01},x_{02},x_{11},x_{12})$.
  Reshaping $\bx$ as a $2 \!\times\! 2$ matrix we get a single minor constraint:
  $x_{01}x_{12}\!=\!x_{02}x_{11}$.
  The matrix $\bx \bx^T$ can be divided into four $m\times m$ blocks:
  \begin{align*}
    \bx \bx^T =
    \left(\begin{smallmatrix}
        X_{zz}&X_{zy}\\
        X_{zy}^T&X_{yy}
    \end{smallmatrix}\right),
    \;\;
    X_{zz} = \left(\begin{smallmatrix}
        x_{01} x_{01}&x_{01} x_{02}\\x_{01} x_{02}&x_{02} x_{02}
    \end{smallmatrix}\right),
    \;\;
    X_{zy} = \left(\begin{smallmatrix}
        x_{01} x_{11}&x_{01} x_{12}\\x_{02} x_{11}&x_{02} x_{12}
    \end{smallmatrix}\right),
    \;\;
    X_{yy} = \left(\begin{smallmatrix}
        x_{11} x_{11}&x_{11} x_{12}\\x_{11} x_{12} &x_{12} x_{12}
    \end{smallmatrix}\right).
  \end{align*}
  The minor constraint ensures that $X_{zy}$ is symmetric, and hence $\bx \bx^T \in \bSS$.
\end{example}

By the above discussion, we can rephrase~\eqref{eq:rankone1} as
\begin{equation}\label{eq:rankone11}
\begin{aligned}
  \min_{\bx = (\bz \Vert \by)}\;
  \by^T \by
  \quad\text{s.t.}\quad
  \bz^T \bz \!=\! 1, \quad
  \bx^T \bs^{\btheta}_i \be_j^T \bx \!=\! 0
  \;\;\forall i \!\in\! [n], j \!\in\! [N], \quad
  \bx \bx^T \in \bSS.
\end{aligned}
\end{equation}
The above is a QCQP, so we can derive an associated SDP relaxation.
But before that we will make one last transformation that will simplify the analysis in the next sections.

There is a simple way turn an arbitrary matrix $M \!\in\! \RR^{N\times N}$ into block symmetric form.
We first symmetrize $M$ by taking the average $\frac{1}{2}(M {+} M^T)$,
and then symmetrize each of its $m \!\times\! m$ blocks.
We let $\Sym(M)\in\bSS$ be the result of this \emph{block symmetrization}.

Since $\bx \bx^T \!\in\! \bSS$, it follows that $\bx^T \bs^{\btheta}_i \be_j^T \bx = \bx^T\Sym(\bs^{\btheta}_i \be_j^T)\bx$.
By substituting in~\eqref{eq:rankone11}
we obtain our final QCQP formulation:
\begin{equation}\label{eq:rankone2}
\qquad\begin{aligned}
  \min_{\bx = (\bz\Vert \by)}\quad
  & \by^T \by \\
  \text{s.t.}\quad
  & \bz^T \bz = 1\\
  & \bx^T\Sym(\bs^{\btheta}_i \be_j^T)\bx=0 &&\text{ for }i \in [n], j \in [N]\\
  & \bx \bx^T \in \bSS
  &&(\text{i.e., the equations in~\eqref{eq:indices} hold})
\end{aligned}
\end{equation}

\begin{example}
  Retake the case from \Cref{ex:simple}.
  There is a matrix $\Sym(\bs^{\btheta}_i \be_j^T)$ for each $i\in\{1,2\}$ and $j\in\{1,2,3,4\}$.
  The first and the last are:
  \begin{gather*}
    \Sym(\bs^{\btheta}_1 \be_1^{\scriptscriptstyle \!T})
    =\Sym\!\left(\!\begin{smallmatrix}
        1 & 0 & 0 & 0 \\
        \theta & 0 & 0 & 0 \\
        0 & 0 & 0 & 0 \\
        1 & 0 & 0 & 0
    \end{smallmatrix}\!\right)
    =\tfrac{1}{2}\Sym\!\left(\begin{array}{c|c}
        \!\!\!\begin{smallmatrix} 2&\theta\\\theta&0\end{smallmatrix}\!\! &
        \!\!\begin{smallmatrix} 0&1\\0&0\end{smallmatrix}\!\!\!\!
        \\[1pt] \hline
        \!\!\!\begin{smallmatrix} 0&0\\1&0\end{smallmatrix}\!\! &
        \!\!\begin{smallmatrix} 0&0\\0&0\end{smallmatrix}\!\!\!\!
    \end{array}\right)
    =\tfrac{1}{4}\!\left(\begin{array}{c|c}
        \!\!\!\!\begin{smallmatrix}4&\!2\theta\!\\\!2\theta\!&0\end{smallmatrix}\!\! &
        \!\!\begin{smallmatrix} 0&1\\1&0\end{smallmatrix}\!\!\!\!
        \\[1pt] \hline
        \!\!\!\begin{smallmatrix} 0&1\\1&0\end{smallmatrix}\!\! &
        \!\!\begin{smallmatrix} 0&0\\0&0\end{smallmatrix}\!\!\!\!
    \end{array}\right)\!,
    \\
    \Sym(\bs^{\btheta}_2 \be_4^{\scriptscriptstyle \!T})
    =\Sym\!\left(\!\begin{smallmatrix}
        0 & 0 & 0 & \theta \\
        0 & 0 & 0 & \theta \\
        0 & 0 & 0 & 1 \\
        0 & 0 & 0 & 1
    \end{smallmatrix}\!\right)
    =\tfrac{1}{2}\Sym\!\left(\begin{array}{c|c}
        \!\!\!\begin{smallmatrix} 0&0\\0&0\end{smallmatrix}\!\! &
        \!\!\begin{smallmatrix} 0&\theta\\0&\theta\end{smallmatrix}\!\!\!
        \\[1pt] \hline
        \!\!\!\begin{smallmatrix} 0&0\\\theta&\theta\end{smallmatrix}\!\! &
        \!\!\begin{smallmatrix} 0&1\\1&2\end{smallmatrix}\!\!\!
    \end{array}\right)
    =\tfrac{1}{4}\!\left(\begin{array}{c|c}
        \!\!\!\begin{smallmatrix} 0&0\\0&0\end{smallmatrix}\!\! &
        \!\!\begin{smallmatrix} 0&\theta\\\theta&2\theta\end{smallmatrix}\!\!\!\!
        \\[1pt] \hline
        \!\!\!\begin{smallmatrix} 0&\theta\\\theta&\!2\theta\!\end{smallmatrix}\!\! &
        \!\!\begin{smallmatrix} 0&2\\2&4\end{smallmatrix}\!\!\!\!
    \end{array}\right)\!.
    \end{gather*}
\end{example}

\subsection{SDP relaxation}

Problem~\eqref{eq:rankone2} is a homogeneous QCQP.
Its Shor relaxation is:
\begin{subequations}\label{eq:sdp}
\begin{equation}\label{eq:primalsdp}
  \boxed{
    \quad\begin{aligned}
      \min_{X}\quad
      & \trace(X_{yy}) \\
      \text{s.t.}\quad
      & \trace(X_{zz}) = 1 \\
      & \Sym(\bs^{\btheta}_i \be_j^T) \!\bullet X=0 && \text{ for } i\!\in\![n], j \in [N]\\
      & X =
      \left(\begin{smallmatrix}
          X_{zz} & X_{zy}\\
          X_{zy}^T & X_{yy}
      \end{smallmatrix}\right)
      \in \bSS\\
      & X\succeq 0
    \end{aligned}\;\;
  }
\end{equation}
Let $(\bSS)^\perp \subset \SS^{N}$ be the orthogonal complement of~$\bSS$,
which consists of the block skew-symmetric matrices.
The corresponding dual SDP is:
\begin{equation}\label{eq:dualsdp}
  \boxed{
    \quad\begin{aligned}
      \max_{\gamma,\mu,\Sigma}\quad
      & \gamma \\
      \text{s.t.}\quad
      &
      \left(\begin{smallmatrix}
          -\gamma\, \id_m & 0 \\
          0 & \id_{m k}
      \end{smallmatrix}\right)
      \,-\, \sum\nolimits_{ij} \mu_{ij} \Sym(\bs^{\btheta}_i \be_j^T) \,-\, \Sigma
      \;\succeq\; 0\\
      &
      \gamma, \mu_{ij} \in \RR\;
      (\text{for } ij\in[n]{\times}[N]),\quad
      \Sigma\in (\bSS)^\perp
    \end{aligned}\quad
  }
\end{equation}
\end{subequations}

The above pair of SDPs are our proposed convex relaxation for~\eqref{eq:stls}.
Both SDPs have the same optimal value since the dual problem is strictly feasible (Slater's condition).
Hence, the following relations hold:
$$ \val\text{\eqref{eq:stls}} = \val\text{\eqref{eq:rankone2}} \geq \val\text{\eqref{eq:primalsdp}} = \val\text{\eqref{eq:dualsdp}}. $$
Let $X^*,\gamma^*,\mu_{ij}^*,\Sigma^*$ be the optimal solutions of the primal/dual SDPs.
The relaxation is \emph{exact} if $X^*$ of~\eqref{eq:primalsdp} has rank one.
In this case the optimal values from above are all equal,
the minimizer $\bu^*$ of~\eqref{eq:stls} can be recovered from~$X^*$,
and the dual variables $\gamma^*,\mu_{ij}^*,\Sigma^*$ give an optimality certificate for~$\bu^*$.
\Cref{thm:lownoise} states that the relaxation is always exact in the low noise regime.

\begin{remark}[Weighted seminorms]\label{thm:weighted}
  Consider the variant of~\eqref{eq:stls} that uses the weighted $\ell_2$-(semi)norm
  $\|\bv\|_W \!=\! \sqrt{\bv^T W \bv}$, with $W \!\succeq\! 0$.
  Our SDP relaxation can be easily adapted to this case.
  In~\eqref{eq:primalsdp} we simply replace the objective function by
  $ (W \!\otimes\! \id_m) \bullet X_{yy} $.
  \Cref{thm:lownoise} remains valid for weighted $\ell_2$-norms (i.e., $W\succ 0$).
\end{remark}

\begin{remark}[Missing data]\label{thm:missing}
  Sometimes in practice the data vector $\btheta$ has missing entries~\cite[\S5.1]{Markovsky2011}, e.g., due to data corruption.
  We can model this situation by considering a diagonal weight matrix,
  where $W_{ii}$ is zero if $\theta_i$ is missing and one otherwise.
  The proof of \Cref{thm:lownoise} is no longer valid for seminorms.
  Nonetheless, we will see in \Cref{ex:hankelmissing} that the SDP relaxation might still behave well in practice.
\end{remark}

\section{The role of non-tangentiality}\label{s:nontangential}

In the previous section we rephrased problem~\eqref{eq:stls} as the QCQP~\eqref{eq:rankone2},
which involves variables $\bx \!=\! (\bz \Vert \by) \!\in\! \RR^N$,
with $\bz \!\in\! \RR^m$, $\by \!\in\! \RR^{m k}$.
Let $\mathbf{h}_{\btheta}(\bx)$ be the list of all quadratic constraints in~\eqref{eq:rankone2}
and let $\mathcal{X}_{\btheta}$ denote the feasible set:
\begin{equation}\label{eq:qcqpconstraints}
  \begin{gathered}
    \mathbf{h}_{\btheta} \,:=\, \left(
      h_0,\;
      h^{\btheta}_{ij}\text{ for } ij\!\in\! [n]{\times}[N],\;
      h_l\text{ for } l\!\in\! L
    \right),
    \quad\;
    \mathcal{X}_{\btheta} \,:=\,
    \left\{ \bx : \mathbf{h}_{\btheta}(\bx) \!=\! 0 \right\}
    \,\subset\, \RR^{N},
    \\
    h_0(\bx) := \bz^T \bz \!-\! 1,\quad
    h^{\btheta}_{ij}(\bx) := \bx^T\Sym(\bs^{\btheta}_i \be_j^T)\bx,\quad
    h_l(\bx) := x_{l_1}x_{l_2} \!-\! x_{l_3}x_{l_4}.
  \end{gathered}
\end{equation}
Let~$\bartheta \in \mathcal{U}$ be a fixed parameter satisfying \Cref{ass:transversality}.
This is the only parameter we consider in this section,
so we will omit the $\btheta$-subindices to simplify the notation.
Let $\barx$ be the optimal solution of the QCQP.
Since $\bartheta \in \mathcal{U}$ then the optimal value must be zero,
and hence $\barx$ has the form $(\barz \Vert 0)$.
We will show in this section that $\mathcal{X}$ is a smooth manifold nearby~$\barx$.
The non-tangentiality condition is the main ingredient behind this.

\subsection{Abadie constraint qualification}

We introduce here a more refined notion of smoothness,
which takes into account not only the geometry,
but also the algebra.

Let $\mathbf{f} : \RR^d \!\to\! \RR^l$ continuously differentiable
and $\mathcal{Y} := \{ \by \!\in\! \RR^d : \mathbf{f}(\by) {=} 0 \}$.
The \emph{Abadie constraint  qualification} (ACQ) holds at a point $\bary \!\in\! \mathcal{Y}$
if the set $\mathcal{Y}$ is a smooth manifold nearby~$\bary$
and the tangent space at $\bary$ is given by
$T_{\bary}(\mathcal{Y}) = \ker \nabla \mathbf{f}(\bary)$.

\begin{example}\label{ex:ACQfails}
  Let
  $
    \mathcal{Y} :=
    \{
      \by \!\in\! \RR^4 :
      y_1 y_4 {=} y_2 y_3,\,
      y_1 {+} y_4 {=} y_2 \!+\! y_3,\,
      y_1 {=} y_4
    \}.
  $
  The set $\mathcal{Y}$ consists of the multiples of $\bary \!:=\! (1,1,1,1)$.
  Since $T_{\bary}(\mathcal{Y}) \!=\! \spann \{\bary\}$
  and
  $ \ker \nabla \mathbf{f}(\bary) =
    \{\by : y_1 {+} y_4 {=} y_2 \!+\! y_3,\, y_1 {=} y_4 \}, $
  then ACQ does not hold at~$\bary$.
  Nonetheless, there is an alternative description of~$\mathcal{Y}$ for which ACQ holds,
  namely $\mathcal{Y} = \{\by : y_1 \!=\! y_2, y_2 \!=\! y_3, y_3\!=\! y_4\}$.
\end{example}

The above example illustrates that the ACQ property depends on the algebraic representation of the set~$\mathcal{Y}$.
Informally, ACQ states that the local behavior of $\mathcal{Y}$ is well described by the linearization of~$\mathbf{f}$.
This condition (and similar constraint qualifications) play a pivotal role in optimization \cite[\S5.1]{Bazaraa2013}.
For instance, constraint qualifications are needed to ensure the existence of Lagrange multipliers at a local minimum,
or to guarantee convergence for local optimization methods.

We may also define the ACQ property for parametric sets.
Let $\mathbf{g} : \RR^t \!\to\! \RR^d$
and $\mathcal{Y} := \image(\mathbf{g}) = \{ \by \!\in\! \RR^d : \by \!=\! \mathbf{g}(\bv) \text{ for } \bv \!\in\! \RR^t\}$.
We say that ACQ holds at $\bary \!=\! \mathbf{g}(\barv)$
if $\mathcal{Y}$ is a smooth manifold nearby~$\bary$
and $T_{\bary}(\mathcal{Y}) = \image \nabla \mathbf{g}(\barv)$.
Note that this is equivalent to saying that the implicit set $\{(\by,\bv) : \by = \bg(\bv)\}$ satisfies ACQ at $(\bary,\barv)$.

Assume now that $\mathbf{f}$ is a polynomial map.
In such a case $\mathcal{Y}$ is a real \emph{algebraic variety}
(the zero set of polynomial equations).
The ACQ property is called \emph{regularity} or \emph{nonsingularity} in algebraic geometry~\cite[\S16.6]{Eisenbud2013}.
Recall that the \emph{ideal} generated by $\mathbf{f} = (f_1,\dots,f_m)$ consists of all sums $\sum_i p_i(\by) f_i(\by)$ where each $p_i$ is a polynomial.
Changing the defining equations preserves the ACQ property whenever the generated ideal~$\langle \mathbf{f} \rangle$ is unchanged.
More generally, the following lemma allows us to understand how $\ker \nabla \mathbf{f}(\bary)$ behaves under a polynomial map.

\begin{lemma}\label{thm:ACQideal}
  Let $\mathbf{f}_1: \RR^{d_1} \!\to\! \RR^{l_1}$,
  $\mathbf{f}_2: \RR^{d_2} \!\to\! \RR^{l_2}$,
  $\phi: \RR^{d_1} \!\to\! \RR^{d_2}$ be polynomial maps.
  Assume that the ideals generated by
  $\mathbf{f}_1$ and $\mathbf{f}_2 \circ \phi$ satisfy
  $\langle \mathbf{f}_2 \circ \phi\rangle \subset \langle \mathbf{f}_1 \rangle$.
  Let $\bary_1 \!\in \RR^{d_1}$ with $\mathbf{f}_1(\bary_1) {=} 0$,
  and let $\bary_2 \!:=\! \phi(\bary_1)$.
  Then $\nabla \phi(\bary_1) (\ker \nabla \mathbf{f}_1(\bary_1)) \subset \ker \nabla \mathbf{f}_2(\bary_2)$.
\end{lemma}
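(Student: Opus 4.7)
The plan is to exploit the ideal containment hypothesis to express each component of $\mathbf{f}_2 \circ \phi$ as a polynomial combination of the components of $\mathbf{f}_1$, differentiate this identity at $\bary_1$, and then use the vanishing $\mathbf{f}_1(\bary_1) = 0$ together with the chain rule to transport kernels under $\nabla\phi(\bary_1)$.

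Concretely, I would first unpack the hypothesis $\langle \mathbf{f}_2 \circ \phi\rangle \subset \langle \mathbf{f}_1\rangle$ coordinatewise: for every $k \in [l_2]$ there exist polynomials $p_{k1},\dots,p_{k l_1}$ on $\RR^{d_1}$ with
\begin{equation*}
(f_2^{(k)} \circ \phi)(\by_1) \;=\; \sum_{i=1}^{l_1} p_{ki}(\by_1)\, f_1^{(i)}(\by_1).
\end{equation*}
Differentiating both sides at $\bary_1$ using the Leibniz rule yields two summands, one containing factors $\nabla p_{ki}(\bary_1)\, f_1^{(i)}(\bary_1)$ and one containing $p_{ki}(\bary_1)\,\nabla f_1^{(i)}(\bary_1)$. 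Since $\mathbf{f}_1(\bary_1)=0$ by assumption, the first summand vanishes, leaving
\begin{equation*}
\nabla (f_2^{(k)} \circ \phi)(\bary_1) \;=\; \sum_{i=1}^{l_1} p_{ki}(\bary_1)\, \nabla f_1^{(i)}(\bary_1).
\end{equation*}
Stacking over $k$, this says $\nabla(\mathbf{f}_2 \circ \phi)(\bary_1) = P\,\nabla \mathbf{f}_1(\bary_1)$ for the matrix $P := (p_{ki}(\bary_1))_{k,i}$.

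Next I invoke the chain rule, which gives $\nabla(\mathbf{f}_2 \circ \phi)(\bary_1) = \nabla \mathbf{f}_2(\bary_2)\,\nabla \phi(\bary_1)$ since $\bary_2 = \phi(\bary_1)$. Combining with the previous display yields the matrix identity
\begin{equation*}
\nabla \mathbf{f}_2(\bary_2)\,\nabla \phi(\bary_1) \;=\; P\,\nabla \mathbf{f}_1(\bary_1).
\end{equation*}
For any $\bw \in \ker \nabla \mathbf{f}_1(\bary_1)$, right-multiplying by $\bw$ kills the right-hand side, so $\nabla \mathbf{f}_2(\bary_2)\,(\nabla \phi(\bary_1)\bw) = 0$, i.e., $\nabla \phi(\bary_1)\bw \in \ker \nabla \mathbf{f}_2(\bary_2)$, as required.

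There is no real obstacle here; the only point that needs a moment's care is the vanishing of the Leibniz term involving $\nabla p_{ki}(\bary_1)\,f_1^{(i)}(\bary_1)$, which is precisely why the hypothesis is stated on the ambient ideals (so that the relations hold as polynomial identities, not just on $\mathcal{Y}_1$) combined with evaluation at a point of $\mathcal{Y}_1$. Once this is observed, the proof reduces to the chain rule and a single linear algebra step.
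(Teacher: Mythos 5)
Your proof is correct and is essentially identical to the paper's: both express $\mathbf{f}_2\circ\phi = H\,\mathbf{f}_1$ with a polynomial matrix $H$ (your $P = H(\bary_1)$), differentiate, use $\mathbf{f}_1(\bary_1)=0$ to kill the Leibniz term, and apply the chain rule to transport kernels. Your write-up is slightly more explicit about why the extra Leibniz summand vanishes, but the argument is the same.
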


\begin{proof}
  Let $\by_1$ be the variables in $\RR^{d_1}$.
  Since the entries of $(\mathbf{f}_2\circ \phi)(\by_1)$ lie in the ideal $\langle \mathbf{f}_1(\by_1) \rangle$,
  there is an $l_2 \times l_1$ matrix $H(\by_1)$ with polynomial entries such that
  \begin{align*}
    (\mathbf{f}_2 \circ \phi)(\by_1) = H(\by_1) \mathbf{f}_1(\by_1).
  \end{align*}
  By taking the gradient of the above equation and then evaluating at~$\bary_1$,
  we get
  \begin{align*}
    \nabla\mathbf{f}_2 (\bary_2) \nabla \phi(\bary_1) = H(\bary_1) \nabla \mathbf{f}_1(\bary_1).
  \end{align*}
  So if $\bv \in \ker \nabla \mathbf{f}_1(\bary_1)$ then
  $\nabla \phi(\bary_1) \bv \in \ker\nabla \mathbf{f}_2(\bary_2)$,
  as wanted.
\end{proof}

\subsection{Non-tangentiality and ACQ}
Recall the non-tangentiality condition from~\eqref{eq:nontangential}.
This is a geometric condition, independent of the algebraic description.
As shown next, non-tangentiality arises naturally when
establishing ACQ for an intersection.

\begin{lemma}\label{thm:nontangential}
  Let
  $\mathcal{Y}_1 := \{ \by : \mathbf{f}_1(\by) {=} 0 \}$,
  $\mathcal{Y}_2 := \{ \by : \mathbf{f}_2(\by) {=} 0 \}$,
  $\mathcal{Y} := \{ \by : \mathbf{f}_1(\by) {=} \mathbf{f}_2(\by) {=} 0 \}$.
  Let $\by \!\in\! \mathcal{Y}$ such that ACQ holds for both $\mathcal{Y}_1, \mathcal{Y}_2$ at~$\by$.
  Then ACQ holds for $\mathcal{Y}$ at~$\by$
  if and only if $\mathcal{Y}_1, \mathcal{Y}_2$ meet non-tangentially at~$\by$.
\end{lemma}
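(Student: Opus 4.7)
The proof is essentially a definitional unpacking, with the given ACQ hypotheses on $\mathcal{Y}_1$ and $\mathcal{Y}_2$ doing all the actual work. The plan is to reduce both sides of the equivalence to the same pair of geometric conditions.

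First, I would observe that $\mathcal{Y}$ is cut out by the concatenated map $\mathbf{f} := (\mathbf{f}_1, \mathbf{f}_2) : \RR^d \to \RR^{l_1+l_2}$, whose Jacobian at $\by$ stacks $\nabla \mathbf{f}_1(\by)$ on top of $\nabla \mathbf{f}_2(\by)$. This immediately gives
\begin{equation*}
  \ker \nabla \mathbf{f}(\by) \;=\; \ker \nabla \mathbf{f}_1(\by) \,\cap\, \ker \nabla \mathbf{f}_2(\by).
\end{equation*}
Invoking the hypothesis that ACQ holds for $\mathcal{Y}_1$ and $\mathcal{Y}_2$ at $\by$, each kernel coincides with the corresponding tangent space, so
\begin{equation*}
  \ker \nabla \mathbf{f}(\by) \;=\; T_{\by}(\mathcal{Y}_1) \,\cap\, T_{\by}(\mathcal{Y}_2).
\end{equation*}

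Next, I would compare the two defining statements. Since $\mathcal{Y} = \mathcal{Y}_1 \cap \mathcal{Y}_2$ as sets, the statement ``ACQ holds for $\mathcal{Y}$ at $\by$'' unfolds to: $\mathcal{Y}_1 \cap \mathcal{Y}_2$ is a smooth manifold near $\by$ and $T_{\by}(\mathcal{Y}_1 \cap \mathcal{Y}_2) = \ker \nabla \mathbf{f}(\by)$. Substituting the kernel identity above, this reads: $\mathcal{Y}_1 \cap \mathcal{Y}_2$ is a smooth manifold near $\by$ and $T_{\by}(\mathcal{Y}_1 \cap \mathcal{Y}_2) = T_{\by}(\mathcal{Y}_1) \cap T_{\by}(\mathcal{Y}_2)$. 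But this is verbatim the non-tangentiality condition from~\eqref{eq:nontangential}, which gives the desired equivalence in both directions.

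No step in the argument is genuinely hard; the whole proof is a careful tracking of definitions, and in particular does not require the more refined ideal-theoretic \Cref{thm:ACQideal}. The only point deserving attention is that the ``smooth manifold near $\by$'' clause appears on both sides of the equivalence and is therefore shared, so the substantive content reduces purely to the tangent-space identity. Once that is explicitly noted, the biconditional follows without further work.
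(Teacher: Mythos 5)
Your proof is correct and is essentially identical to the paper's own argument: both reduce the claim to the observation that $\ker\nabla(\mathbf{f}_1,\mathbf{f}_2)(\by)=\ker\nabla\mathbf{f}_1(\by)\cap\ker\nabla\mathbf{f}_2(\by)$, then use the ACQ hypotheses on $\mathcal{Y}_1,\mathcal{Y}_2$ to replace kernels by tangent spaces, after which the two definitions coincide verbatim. No gaps.
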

\begin{proof}
  As $\mathcal{Y}_i$ satisfies ACQ for $i\!=\!1,2$,
  then $ T_{\by}(\mathcal{Y}_i) \!=\! \ker \nabla f_i$.
  The lemma follows from:
  \begin{align*}
    \mathcal{Y} \text{ satisfies ACQ }
    &\quad\iff\quad
    \mathcal{Y} \text{ is smooth and }\,
    T_{\by}(\mathcal{Y}) = \ker \nabla \mathbf{f}_1(\by) \cap \ker \nabla \mathbf{f}_2(\by),
    \\
    \text{ non-tangential }
    &\quad\iff\quad
    \mathcal{Y} \text{ is smooth and }\,
    T_{\by}(\mathcal{Y}) = T_{\by}(\mathcal{Y}_1) \cap T_{\by}(\mathcal{Y}_2).
    \qedhere
  \end{align*}
\end{proof}

\begin{remark}
  The lemma also holds if we use parametric descriptions for either $\mathcal{Y}_1, \mathcal{Y}_2$.
\end{remark}

\begin{example}
  The set $\mathcal{Y}$ from \Cref{ex:ACQfails} is the intersection of
  $ \mathcal{M} := \{ \by \!\in\! \RR^4 : y_1 y_4 {=} y_2 y_3 \} $
  and
  $ \mathcal{L} := \{ \by \!\in\! \RR^4 :
    y_1 {+} y_4 {=} y_2 \!+\! y_3,\, y_1 {=} y_4 \} $.
  The determinantal variety $\mathcal{M}$ satisfies ACQ away from the origin,
  and the linear space $\mathcal{L}$ satisfies ACQ everywhere.
  As ACQ does not hold for $\mathcal{Y}$,
  then they do not intersect non-tangentially.
\end{example}

\subsection{ACQ for our problem}

Our main focus is in the variety $\mathcal{X}$ from~\eqref{eq:qcqpconstraints}.
We goal is to prove the following lemma.

\begin{lemma}\label{thm:ACQ}
  Under \Cref{ass:transversality}, we have that
  $\mathcal{X}$ satisfies ACQ at~$\barx$.
\end{lemma}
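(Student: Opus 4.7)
The plan is to decompose $\mathcal{X}$ as an intersection $\mathcal{X}_1 \cap \mathcal{X}_2$ of two sets that each satisfy ACQ at $\barx$, check that they meet non-tangentially, and invoke Lemma~\ref{thm:nontangential}. As a preliminary simplification, I replace $h^{\btheta}_{ij}(\bx) = \bx^T\Sym(\bs^{\btheta}_i\be_j^T)\bx$ by the more convenient $\tilde h^{\btheta}_{ij}(\bx) := (\bx^T\bs^{\btheta}_i)\,x_j$. The difference is the quadratic form associated to $\bs^{\btheta}_i\be_j^T - \Sym(\bs^{\btheta}_i\be_j^T) \in (\bSS)^\perp$, and pairing an element of $(\bSS)^\perp$ with $\bx\bx^T$ picks out the block-skew part of $\bx\bx^T$, whose entries are precisely $\pm\tfrac{1}{2}h_l$. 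So $h^{\btheta}_{ij} - \tilde h^{\btheta}_{ij}$ is an $\RR$-linear combination of the $h_l$, giving $\langle h_0, h_l, h^{\btheta}_{ij}\rangle = \langle h_0, h_l, \tilde h^{\btheta}_{ij}\rangle$; by the remark following Example~\ref{ex:ACQfails}, ACQ is unaffected. I then set $\mathcal{X}_1 := \{\bx : h_0 = 0,\ h_l = 0\ \forall l\in L\}$ (the Segre variety parametrized by $\phi(\bv,\bz):=\vone\otimes\bz$ restricted to $\|\bz\|^2=1$) and $\mathcal{X}_2 := \{\bx : \tilde h^{\btheta}_{ij} = 0\ \forall i,j\}$.

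ACQ for each factor is a direct Jacobian calculation. For $\mathcal{X}_2$: since $\barz\neq 0$ some coordinate $\bar x_j$ with $j\in[m]$ is nonzero, so near $\barx$ the set $\mathcal{X}_2$ coincides with the linear subspace $\{\bx : \bx^T\bs^{\btheta}_i = 0 \;\forall i\}$; using $\barx^T\bs^{\btheta}_i = 0$, one has $\nabla\tilde h^{\btheta}_{ij}(\barx) = \bar x_j\,\bs^{\btheta}_i$, and these collectively span $\spann\{\bs^{\btheta}_i\}$, whose orthogonal complement equals $T_\barx(\mathcal{X}_2)$. For $\mathcal{X}_1$: the map $\phi$ is an immersion at $(0,\barz)$ with image tangent space $\{(\delta\bz \cat \delta\bv\otimes\barz) : \delta\bz\perp\barz,\ \delta\bv\in\RR^k\}$; a direct computation of $\nabla(h_0, h_l)$ at $\barx$ (noting that $\nabla h_l(\barx)$ is nonzero only when $l$ has $i_1=0$) recovers the same subspace as its common kernel.

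The crux is non-tangentiality. Unfolding $\bs^{\btheta}_i$ as the vertical concatenation of the $i$-th columns of $A_\bartheta = \mathscr{S}(\bartheta), B_1, \ldots, B_k$, the intersection $T_\barx(\mathcal{X}_1) \cap T_\barx(\mathcal{X}_2)$ is parametrized by $(\delta\bv, \delta\bz)$ satisfying
\begin{equation*}
  \delta\bz \perp \barz, \qquad \mathscr{S}(\bartheta)^T\delta\bz + \mathscr{S}'(\delta\bv)^T\barz \,=\, 0.
\end{equation*}
On the other hand, $\phi$ identifies a neighborhood of $\barx$ in $\mathcal{X}$ with $\mathcal{P} := \{(\bv,\bz) : \|\bz\|^2 = 1,\ \bz^T\mathscr{S}_\btheta(\bv) = 0\}$, which in turn is a local double cover of $\Mdef \cap \mathcal{L}$ via the projection $(\bv,\bz) \mapsto \mathscr{S}_\btheta(\bv)$. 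Assumption~\ref{ass:transversality} therefore forces $\mathcal{P}$, and hence $\mathcal{X}$, to be a smooth manifold near $\barx$; combined with the standard description $\delta A \in T_{\mathscr{S}(\bartheta)}(\Mdef)$ iff some $\delta\bz$ satisfies $\mathscr{S}(\bartheta)^T\delta\bz + \delta A^T\barz = 0$, this identifies $T_\barx(\mathcal{X})$ with the parametric set displayed above. Hence $T_\barx(\mathcal{X}) = T_\barx(\mathcal{X}_1) \cap T_\barx(\mathcal{X}_2)$, which is non-tangentiality, and Lemma~\ref{thm:nontangential} finishes. The main obstacle is precisely this last step: correctly transporting Assumption~\ref{ass:transversality} from matrix space through the Segre parametrization $\phi$ and the sphere constraint on $\bz$, matching tangent directions on both sides.
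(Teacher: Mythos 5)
Your proof is correct, but it is organized differently from the paper's. The paper first proves ACQ for the low-dimensional variety $\mathcal{W}=\{(\bz \cat \bv): \bz^T\bz=1,\ \bz^T\mathscr{S}_{\bartheta}(\bv)=0\}$ (Lemma~\ref{thm:ACQ0}, by writing $\mathcal{W}=\mathcal{V}\cap\mathcal{L}'$ with $\mathcal{V}=\{(\bz,A):\bz^T\bz=1,\ \bz^TA=0\}$ and invoking Lemma~\ref{thm:nontangential} \emph{downstairs}), and then transports ACQ to $\mathcal{X}\subset\RR^N$ through the Segre embedding $\phi$ by applying the ideal-membership Lemma~\ref{thm:ACQideal} in both directions to get $\ker\nabla\mathbf{g}(\barw)\simeq\ker\nabla\mathbf{h}(\barx)$. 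You instead apply Lemma~\ref{thm:nontangential} \emph{upstairs}, decomposing $\mathcal{X}=\mathcal{X}_1\cap\mathcal{X}_2$ into the Segre cone (cut by the sphere constraint) and a set that is locally the linear space $\{\bx:\bx^T\bs_i^{\btheta}=0\}$; your preliminary reduction of $\Sym(\bs_i^{\btheta}\be_j^T)$ to $\bs_i^{\btheta}\be_j^T$ modulo the minor ideal is a clean and correct use of the paper's observation that ACQ depends only on $\langle\mathbf{h}\rangle$, and your Jacobian computations for each factor check out. The trade-off: your route makes the local geometry of $\mathcal{X}$ in $\RR^N$ more transparent and dispenses with Lemma~\ref{thm:ACQideal}, but the hard content of \Cref{ass:transversality} still has to be transported through $\phi$ to identify $T_{\barx}(\mathcal{X})$, and there you end up re-deriving the substance of Lemma~\ref{thm:ACQ0} inline (the paper's modularization reuses that lemma again in the smoothness verification of Section~5, so it is not wasted effort there). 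One small point of informality: your ``local double cover'' identification of $\mathcal{P}$ with $\Mdef\cap\mathcal{L}$ tacitly assumes $\mathscr{S}$ is injective; if it is not, the fibers are affine and one should argue with $\image\nabla\mathscr{S}$ as in the paper's parametric version of ACQ, but this does not affect the conclusion.
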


We first analyze a simpler case.
Consider the feasible set of the QCQP~\eqref{eq:kernelrepr}:
\begin{align}\label{eq:W}
  \mathcal{W} \,:=\, \{\,\bw \!=\! (\bz \Vert \bv)\, :\,
  \mathbf{g}(\bw) \!=\! 0 \}
  \,\subset\,
  \RR^{m + k},
  \qquad
  \mathbf{g}(\bw) \,:=\, (
  \bz^T \bz-1,\;
  \bz^T \mathscr{S}_{\bartheta}(\bv)
  ).
\end{align}
\Cref{thm:nontangential} allows us to show that
ACQ holds for $\mathcal{W}$ at $\barw := (\barz \Vert 0)$.

\begin{lemma}\label{thm:ACQ0}
  Under \Cref{ass:transversality}, we have that
  $\mathcal{W}$ satisfies ACQ at $\barw := (\barz \Vert 0)$.
\end{lemma}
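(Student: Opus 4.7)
The plan is to reduce the claim to \Cref{thm:nontangential} by expressing $\mathcal{W}$, up to a polynomial change of variables, as a non-tangential intersection of two sets---one encoding the left-kernel/rank-deficiency constraint and one encoding the affine structure---and then reading off non-tangentiality from \Cref{ass:transversality}. First I would observe that \Cref{ass:transversality} implicitly forces $A_{\bartheta}:=\mathscr{S}(\bartheta)$ to lie in $\Mdef$, hence to have rank exactly $m-1$; consequently its left kernel is one-dimensional and spanned by $\pm\barz$.

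To make the intersection transparent I would lift into the ambient space $\RR^{m}\!\times\!\RR^{k}\!\times\!\RR^{m\times n}$ with coordinates $(\bz,\bv,A)$ and define
\begin{align*}
  \mathcal{J}_1 &:= \{(\bz,\bv,A) : \bz^T\bz=1,\ \bz^T A = 0\}, \\
  \mathcal{J}_2 &:= \{(\bz,\bv,A) : A=\mathscr{S}_{\bartheta}(\bv)\},
\end{align*}
so that $(\bz,\bv)\mapsto(\bz,\bv,\mathscr{S}_{\bartheta}(\bv))$ is a polynomial isomorphism (with polynomial inverse given by projection) between $\mathcal{W}$ and $\mathcal{J}_1\cap\mathcal{J}_2$. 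Applying \Cref{thm:ACQideal} in both directions then transfers the ACQ property between the two descriptions, so it suffices to establish ACQ for $\mathcal{J}_1\cap\mathcal{J}_2$ at the lifted base point $\barw' := (\barz,0,A_{\bartheta})$. Now $\mathcal{J}_2$ is affine and satisfies ACQ automatically, while for $\mathcal{J}_1$ at $\barw'$ I would verify that the gradients of its defining equations---namely $(2\barz,0,0)$ coming from $\|\bz\|^2{-}1$ and $(A_{\bartheta}\be_i,0,\barz\be_i^T)$ for $i\in[n]$ coming from $(\bz^T A)_i$---are linearly independent, yielding smoothness and ACQ via the implicit function theorem.

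The main obstacle is verifying that $\mathcal{J}_1$ and $\mathcal{J}_2$ meet non-tangentially at $\barw'$, which is where \Cref{ass:transversality} must be used. Explicit computation gives
\begin{align*}
  T_{\barw'}\mathcal{J}_1 &= \{(\dot\bz,\dot\bv,\dot A):\barz^T\dot\bz=0,\ \dot\bz^T A_{\bartheta}+\barz^T\dot A=0\}, \\
  T_{\barw'}\mathcal{J}_2 &= \{(\dot\bz,\dot\bv,\dot A):\dot A=\textstyle\sum_j \dot v_j B_j\},
\end{align*}
and I would project each onto the $A$-coordinate. Using that $\spann(\barz)$ is exactly the left kernel of $A_{\bartheta}$, the map $\dot\bz\mapsto\dot\bz^T A_{\bartheta}$ is a bijection from $\barz^{\perp}$ onto $\image A_{\bartheta}^T$, so the $A$-projection of $T_{\barw'}\mathcal{J}_1$ is exactly $T_{A_{\bartheta}}\Mdef=\{\dot A:\barz^T\dot A\in\image A_{\bartheta}^T\}$, and the $A$-projection of $T_{\barw'}\mathcal{J}_2$ is $T_{A_{\bartheta}}\mathcal{L}$. \Cref{ass:transversality} then forces the $A$-projection of $T_{\barw'}\mathcal{J}_1\cap T_{\barw'}\mathcal{J}_2$ to equal $T_{A_{\bartheta}}(\Mdef\cap\mathcal{L})$, and a short fiber-bundle argument---using that rank $m{-}1$ is locally preserved so the left-kernel line depends smoothly on $A$, while the $\bv$-fibers of $\mathscr{S}_{\bartheta}$ are a translate of a fixed affine subspace---shows $\mathcal{J}_1\cap\mathcal{J}_2$ is a smooth submanifold near $\barw'$ whose tangent space is precisely $T_{\barw'}\mathcal{J}_1\cap T_{\barw'}\mathcal{J}_2$. \Cref{thm:nontangential} then delivers ACQ for $\mathcal{J}_1\cap\mathcal{J}_2$, which transfers back to~$\mathcal{W}$ and proves the lemma.
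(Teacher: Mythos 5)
Your proposal is correct and follows essentially the same route as the paper: both decompose $\mathcal{W}$ as the intersection of the unit-left-kernel variety with the affine structure constraint, check ACQ for each piece via the implicit function theorem, establish non-tangentiality of the intersection from \Cref{ass:transversality} using the smooth local dependence of the kernel vector on the matrix, and conclude with \Cref{thm:nontangential}. The only cosmetic difference is that you carry the $\bv$-coordinate explicitly in a larger ambient space and transfer ACQ back via \Cref{thm:ACQideal}, whereas the paper keeps a parametric description of $\mathcal{L}$ and invokes the parametric form of \Cref{thm:nontangential}.
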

\begin{proof}
  Consider the sets
  \begin{align*}
    \mathcal{V} := \{(\bz,A) \!:\!  \bz^{T\!} \bz {=} 1, \bz^{T\!} A {=} 0\} \subset \RR^m{\times}\RR^{m\times n\!},
    \quad
    \mathcal{L} := \image \mathscr{S}_{\bartheta} \subset \RR^{m\times n},
    \quad
    {\mathcal{L}}' := \RR^m \!\times\! \mathcal{L}.
  \end{align*}
  The set $\mathcal{W}$ is the intersection of $\mathcal{V}$ and~$\mathcal{L}'$,
  using a kernel description for $\mathcal{V}$ and a parametrization for~$\mathcal{L}$.
  We claim that both $\mathcal{V}, \mathcal{L}'$ satisfy ACQ everywhere.
  This is straightforward for $\mathcal{L'}$,
  as it is the image of an affine map.
  We proceed to~$\mathcal{V}$.
  The Jacobian matrix of the defining equations is
  $
    \left(\begin{smallmatrix}
        2 \bz^T  & 0 \\
        A^T  &\id_n\otimes \bz^T
    \end{smallmatrix}\right),
  $
  of size $(1{+}n) \!\times\! (m{+}m n)$.
  The Jacobian has full row rank on $\mathcal{V}$ since $\bz\!\neq\! 0$.
  By the implicit function theorem, $\mathcal{V}$ is a manifold of codimension~$n{+}1$.
  So the codimension of the tangent space is equal to the rank of the Jacobian.
  It follows that ACQ holds everywhere on~$\mathcal{V}$.

  We proceed to show that $\mathcal{V}$, $\mathcal{L}'$ meet non-tangentially at $(\barz,\bar A)$,
  where $\bar A := \mathscr{S}_{\bartheta}(0)$.
  This would conclude the proof because of \Cref{thm:nontangential}.
  By \Cref{ass:transversality},
  the manifold $\Mdef \!\subset\! \RR^{m\times n}$ of rank $m{-}1$ matrices
  meets $\mathcal{L}$ non-tangentially at $\bar A$,
  so that
  $
    T_{\bar A}(\Mdef \cap\nobreak \mathcal{L}) = T_{\bar A}(\Mdef) \cap \mathcal{L}.
  $
  Observe that each $A \!\in\! \Mdef$ has exactly two unit vectors $\bz$ in its left kernel.
  In a neighborhood $\mathcal{U} \subset \Mdef$ of $\bar A$,
  we can find a smooth map $\phi: \mathcal{U} \to \RR^m$,
  where $\phi(A)$ is a unit vector in the left kernel of~$A$,
  such that $\phi(\bar A) \!=\! \barz$.
  The map $\psi : A \mapsto (\phi(A),A)$ gives a local diffeomorphism of $\Mdef$ and $\mathcal{V}$ (nearby~$\bar A$).
  Moreover, $\psi$ also gives a local diffeomorphism of $\Mdef \cap \mathcal{L}$ and $\mathcal{V} \cap \mathcal{L}'$.
  Hence $\mathcal{V} \cap \mathcal{L}'$ is a smooth manifold nearby $(\bar A,\bv)$
  and
  \begin{align*}
    T_{(\barz,\bar A)}(\mathcal{V} \cap \mathcal{L}')
    \,\simeq\,
    T_{\bar A}(\Mdef \cap \mathcal{L})
    \,=\,
    T_{\bar A}(\Mdef) \cap \mathcal{L}
    \,\simeq\,
    T_{(\barz,\bar A)}(\mathcal{V}) \cap \mathcal{L}'.
  \end{align*}
  It follows that $\mathcal{V}, \mathcal{L}'$ meet non-tangentially at $(\barz,\bar A)$, as wanted.
\end{proof}

We are ready to prove \Cref{thm:ACQ}.
The intuitive idea is to show that the varieties $\mathcal{W}$ and~$\mathcal{X}$ are isomorphic,
so any ACQ point of $\mathcal{W}$ corresponds to an ACQ point of~$\mathcal{X}$.


\begin{proof}[Proof of \Cref{thm:ACQ}]
  Note that $\mathcal{X}$ is the image of $\mathcal{W}$ under the map
  \begin{align*}
    \phi : \RR^m\!\setminus\!\{0\} \times \RR^k \to \RR^{m}\!\setminus\!\{0\} \times \RR^{k m},
    \qquad
    (\bz, \bv) \mapsto (\bz, \bz \otimes \bv).
  \end{align*}
  Observe that the map $\phi$ is a closed embedding
  (it is variant of the projective Segre embedding).
  In particular, the left inverse of $\phi$ is the map
  \begin{align*}
    \tilde{\psi} :
    \RR^{m}\!\setminus\!\{0\} \times \RR^{k m} \to
    \RR^m\!\setminus\!\{0\} \times \RR^k,
    \qquad
    (\bz, \by) \mapsto \bigl(\bz, \,\|\bz\|^{-2} \cdot \textrm{Mat}(\by) \bz\bigr),
  \end{align*}
  where $\mathrm{Mat}(\by)$ is obtained by reshaping $\by$ as a $k\times m$ matrix.
  By \Cref{thm:ACQ0}, the ACQ property is satisfied for $\barw = (\barz \Vert 0) \in \mathcal{W}$.
  In particular, $\mathcal{W}$ is a manifold nearby~$\barw$.
  The restriction of $\phi$ to this manifold is also an embedding.
  It follows that $\mathcal{X}$ a is manifold nearby $\barx = \phi(\barw)$,
  and the differential $\nabla \phi(\barw)$
  gives an isomorphism $T_{\barx}(\mathcal{X}) \simeq T_{\barw}(\mathcal{W)}$.

  Recall that $\mathcal{W}, \mathcal{X}$ are defined by the polynomial equations $\mathbf{g}(\bw), \mathbf{h}(\bx)$.
  It is easy to see that the polynomials in $\mathbf{h} \circ \phi$ lie in the ideal $\langle \mathbf{g}\rangle$.
  By \Cref{thm:ACQideal}, 
  the differential $\nabla \phi(\barw)$,
  which is injective, gives an inclusion
  $\ker \nabla \mathbf{g}(\bw) \hookrightarrow \ker \nabla \mathbf{h}(\barx)$.
  Let
  \begin{align*}
    {\psi} :
    \RR^{m} \times \RR^{m k} \to
    \RR^m \times \RR^k,
    \qquad
    (\bz, \by) \mapsto (\bz, \textrm{Mat}(\by) \bz),
  \end{align*}
  which agrees with $\tilde{\psi}$ when $\|\bz\|{=}1$.
  One can check that the polynomials in $\mathbf{g} \circ \psi$ lie in the ideal $\langle \mathbf{h}\rangle$.
  By \Cref{thm:ACQideal}, we also have an inclusion
  $\ker \nabla \mathbf{h}(\bw) \hookrightarrow \ker \nabla \mathbf{g}(\barx)$,
  and hence $\ker \nabla \mathbf{h}(\bw) \simeq \ker \nabla \mathbf{g}(\barx)$.
  Using that ACQ holds for $\barw \in \mathcal{W}$,
  we get
  \begin{align*}
    \ker \nabla \mathbf{h}(\bx)
    \simeq
    \ker \nabla \mathbf{g}(\bw)
    =
    T_{\barw}(\mathcal{W})
    \simeq
    T_{\barx}(\mathcal{X}).
  \end{align*}
  We conclude that ACQ holds for $\barx \in \mathcal{X}$.
\end{proof}

\section{Exactness under low noise}\label{s:proof}

In this section we prove our main result, \Cref{thm:lownoise}.
To do so, we rely on the general framework to analyze the stability of SDP relaxations from~\cite{Cifuentes2017stability}.

\subsection{Stability of SDP relaxations}

Let
$\mathbf{h}_{\btheta}=(h_{\btheta}^1,h_{\btheta}^2,\dots,h_{\btheta}^\ell)$
be a parametric family of quadratic equations in variables
$\bx \!=\! (\bz \Vert \by)$,
$\bz \!\in\! \RR^m$, $\by \!\in\! \RR^{N-m}$,
parametrized by a vector~$\btheta \!\in\! \RR^k$.
Consider the family of QCQPs:
\begin{align}\label{eq:nearestpoint}
  \tag{$\textrm{P}_\btheta$}
  \min_{\bx = (\bz \Vert \by)} \quad
  \|\by\|^2,
  \qquad\text{ where } \qquad
  \mathcal{X}_{\btheta} \,:=\,
  \left\{ \bx : \mathbf{h}_{\btheta}(\bx) \!=\! 0 \right\}
  \,\subset\, \RR^{N}.
\end{align}
We assume that there is a parameter $\bartheta$ such that the optimal value of \refzero{} is zero.
It can be shown that for such a parameter the Shor SDP relaxation is always exact.
The next theorem, proved in~\cite{Cifuentes2017stability},
gives conditions under which the Shor relaxation of~\eqref{eq:nearestpoint} is stable nearby~$\bartheta$,
in the sense that it is also exact for any $\btheta$ sufficiently close to~$\bartheta$.

\begin{theorem}[SDP stability] \label{thm:stability}
  Consider the parametric family~\eqref{eq:nearestpoint}.
  Let $\bartheta$ be a parameter such that the minimizer of~\refzero\ has the form $\barx = (\barz \Vert 0)$.
  Assume that the following conditions hold:
  \begin{itemize}
    \item {\rm (constraint qualification)}
      ACQ holds for $\mathcal{X}_{\bartheta}$ at~$\barx$.
    \item {\rm (smoothness)} The set
      $\{(\btheta,\bx): \mathbf{h}_\btheta(\bx){=}0\}$
      is a smooth manifold nearby $(\bartheta,\barx)$.
    \item {\rm (not a branch point)} The right kernel of $\nabla_z \mathbf{h}_{\bartheta}(\barx)$ is trivial.
    \item {\rm (restricted Slater)} There exists $\blambda\in \Lambda$ such that
      $\mathcal{A}(\blambda)|_{(\barz)^\perp}\succ\nobreak 0$,
      where
      \begin{align}\label{eq:Amap}
        \Lambda :=\{\blambda\in \RR^\ell : \blambda^T \nabla \mathbf{h}_{\bartheta}(\barx) = 0\}
        \quad\text{ and }\quad
        \mathcal{A}(\blambda) := \sum\nolimits_{i=1}^{\ell} \lambda_i \nabla^2_{zz} h_{\bartheta}^i \in \SS^{m}.
      \end{align}
  \end{itemize}
  Then the Shor relaxation of~\eqref{eq:nearestpoint} is exact whenever $\btheta$ is close enough to~$\bartheta$.
\end{theorem}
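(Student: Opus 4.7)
The plan is to establish exactness by producing, for each $\btheta$ in a neighborhood of $\bartheta$, a rank-one primal SDP solution together with a matching dual certificate, so that strong duality closes the gap between the QCQP and its Shor relaxation. Exactness at $\bartheta$ itself is almost automatic: the optimal value of~\refzero{} is zero, achieved by $\barx = (\barz \Vert 0)$, and the outer product $\barx\,\barx^T$ is a primal-feasible rank-one SDP solution attaining value~$0$. The task is to deform this primal-dual pair as $\btheta$ moves.

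First, I would build a smooth family of primal candidates. The smoothness hypothesis makes $\{(\btheta,\bx):\mathbf{h}_\btheta(\bx){=}0\}$ a manifold near $(\bartheta,\barx)$; combined with ACQ and the triviality of $\ker \nabla_z \mathbf{h}_{\bartheta}(\barx)$ (which says the $\bz$-direction of the Jacobian is non-degenerate, ruling out branching of solutions at $\bartheta$), the implicit function theorem yields a unique smooth lift $\btheta \mapsto \bx(\btheta) = (\bz(\btheta) \Vert \by(\btheta))$ with $\bx(\bartheta) = \barx$. The rank-one matrix $X(\btheta) := \bx(\btheta)\bx(\btheta)^T$ is then primal feasible for the Shor SDP, with objective $\|\by(\btheta)\|^2$ matching the QCQP optimum of~\eqref{eq:nearestpoint}.

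Next, I would construct the dual certificate. ACQ supplies Lagrange multipliers $\bar\blambda$ at $\barx$, and the associated SDP slack matrix $S(\bar\blambda,\bartheta)$ satisfies complementary slackness $S\,\barx = 0$, though it need not yet be PSD. Adding any $\blambda \in \Lambda$ to $\bar\blambda$ preserves $S\barx = 0$ by definition of~$\Lambda$, and the restricted Slater hypothesis lets me choose $\blambda$ with $\mathcal{A}(\blambda)|_{(\barz)^\perp} \succ 0$. A Schur-complement computation---using that the $\by\by$-block of $S$ comes directly from the positive definite identity part of the cost $\|\by\|^2$---then upgrades $S$ to PSD with kernel exactly $\spann\{\barx\}$. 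To extend the certificate to nearby $\btheta$, I solve the linear complementarity equations $S(\blambda,\btheta)\,\bx(\btheta) = 0$ for $\blambda$; ACQ guarantees consistency at $\bartheta$, and a continuous-selection argument (or the implicit function theorem applied to a suitable reduction) yields a continuous family $\blambda(\btheta)$ specializing to $\bar\blambda+\blambda^*$ at $\bartheta$, where $\blambda^*$ is the restricted Slater witness.

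The hard part is preserving $S(\btheta)\succeq 0$ under perturbation. At $\bartheta$ the slack matrix has a one-dimensional kernel, so in principle a perturbation could push some eigenvalue negative. The decisive point is that complementary slackness $S(\btheta)\bx(\btheta)=0$ pins the critical eigenvalue to zero exactly, while the remaining $N{-}1$ eigenvalues---strictly positive at $\bartheta$ by the restricted Slater upgrade---stay strictly positive for $\btheta$ near $\bartheta$ by continuity of the spectrum. Once $S(\btheta)\succeq 0$, $S(\btheta)\bx(\btheta)=0$, and $\mathbf{h}_\btheta(\bx(\btheta))=0$ all hold, weak SDP duality forces the primal value of $X(\btheta)$ to coincide with the dual value attained by $\blambda(\btheta)$; since $X(\btheta)$ is rank one, the Shor relaxation is exact, proving the theorem.
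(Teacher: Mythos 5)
You should first be aware that the paper does not actually prove this theorem: its ``proof'' is a one-line citation to Theorem~5.1 of the stability framework in~\cite{Cifuentes2017stability}, specialized to nearest-point problems. Your outline reconstructs the same primal--dual perturbation strategy used there (a rank-one primal candidate built from the perturbed minimizer, a dual slack matrix made positive semidefinite on $(\barx)^\perp$ via restricted Slater and a Schur complement, and an eigenvalue-continuity argument pinned by complementary slackness), and your final step --- that $S(\btheta)\,\bx(\btheta)=0$ forces the critical eigenvalue to be exactly zero while the remaining $N-1$ eigenvalues stay positive by continuity --- is correct and is the right way to close the argument.

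However, two steps are genuinely incomplete. First, the claim that the implicit function theorem yields ``a unique smooth lift'' $\btheta\mapsto\bx(\btheta)$ is false as stated: $\mathcal{X}_{\btheta}$ is in general a positive-dimensional manifold near $\barx$, so there is no unique lift, and injectivity of $\nabla_z \mathbf{h}_{\bartheta}(\barx)$ does not make the (typically overdetermined) system $\mathbf{h}_{\btheta}(\bz,\by)=0$ solvable for $\bz$ by the implicit function theorem. What you actually need is that the \emph{constrained minimizer} of~\eqref{eq:nearestpoint} persists and converges to $\barx$ as $\btheta\to\bartheta$; this is where the branch-point condition really enters (it makes the fibers of the projection $\bx\mapsto\by$ discrete near $\barx$, which together with the compactness coming from $\bz^T\bz=1$ prevents minimizers from drifting away), and it is a separate argument from the IFT. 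Second, the ``continuous-selection argument'' producing multipliers $\blambda(\btheta)$ with $S(\blambda(\btheta),\btheta)\,\bx(\btheta)=0$ and $\blambda(\btheta)$ converging to the restricted-Slater witness is asserted rather than proved: ACQ at $\bartheta$ gives multipliers only at $\bartheta$ (where they are trivially $0$, since $\nabla\|\by\|^2$ vanishes at $\barx$ and hence $\Lambda$ is the full multiplier set); it does not guarantee a constraint qualification at the perturbed minimizers, and the affine solution set of the stationarity system can change dimension as $\btheta$ varies, so obtaining a selection that converges to the particular point you need requires a real proof. A smaller, fixable omission: to make the Schur complement work you must scale the restricted-Slater witness, replacing $\blambda^*$ by $t\blambda^*$ with $t>0$ small, so that the cross block (which is $O(t)$) contributes only $O(t^2)$ against the $O(t)$ positive-definite $zz$-block on $(\barz)^\perp$. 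These are precisely the technical points the citation to~\cite{Cifuentes2017stability} is invoked to handle, so your sketch is a faithful roadmap of the intended proof but not yet a complete argument.
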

\begin{proof}
  This is a special instance of~\cite[Thm.~5.1]{Cifuentes2017stability}
  in which the branch point condition is specialized to the case of nearest point problems (see~\cite[Ex.~5.12]{Cifuentes2017stability}).
\end{proof}

\begin{remark}
  The above theorem remains valid for weighted $\ell_2$-norms.
\end{remark}

The QCQP~\eqref{eq:rankone2} is a special case of~\eqref{eq:nearestpoint},
so it suffices to verify that the conditions from \Cref{thm:stability} are satisfied.
The last condition (restricted Slater) is particularly challenging,
as discussed in~\cite{Cifuentes2017stability}.
The linear space $\Lambda\subset\RR^\ell$ from~\eqref{eq:Amap}
consists of the Lagrange multipliers at~$\barx$.
So we need to prove the existence of a Lagrange multiplier vector~$\blambda$
that satisfies a certain PSD condition,
but the theorem does not mention how to find it.
We address this issue by introducing a new condition
which is easier to check and implies restricted Slater.

We proceed to explain this new condition.
Assume that some terms in the summation of~$\mathcal{A}(\blambda)$ vanish.
Let $\mathrm{I} \,\sqcup\, \mathrm{II} = [\ell]$
be a partition of the $\ell$~equations into two groups,
where the second group corresponds to the vanishing terms, i.e.,
\begin{align}\label{eq:partition}
  i\in \textrm{II}
  \quad\implies\quad
  (
  \text{ either }\quad
  \nabla^2_{zz} h^i_{\bar\theta} = 0
  \quad \text{ or }\quad
  \lambda_i \!=\! 0 \;\;\forall\, \blambda \in \Lambda
  \;).
\end{align}
We may divide the multipliers accordingly:
$\blambda = (\lamI,\lamII)$ with $\lamI \!\in \RR^{\ell_1}$, $\lamII \!\in \RR^{\ell_2}$.
Let $\alpha$ be the restriction of $\mathcal{A}$ to $\lamI$, i.e.,
\begin{align}\label{eq:restriction}
  \alpha \,:\, \RR^{\ell_1} \to\, \SS^m,
  \qquad
  \lamI \,\mapsto\, \sum\nolimits_{i \,\in\, \mathrm{I}} \lambda_i\, \nabla^2_{zz} h_{\bartheta}^i.
\end{align}
Note that $\mathcal{A}(\blambda) \!=\! \alpha(\lamI)$ for $\blambda \!\in\! \Lambda$
because of~\eqref{eq:partition}.
We can similarly partition the Jacobian $J := \nabla \mathbf{h}(\barx)$ in the form
$J = \left(\JI \Vert \JII\right)$,
with $\JI\in \RR^{\ell_1\times N}$, $\JII\in \RR^{\ell_2\times N}$.

The next lemma provides a sufficient condition for restricted Slater.
We use the notation $\symm(S) := \frac{1}{2}(S {+} S^T)$ for the \emph{symmetric part} of a matrix~$S$.

\begin{lemma}\label{thm:sufficientcondition}
  Let $\alpha,\JI,\JII$ as above.
  Consider the linear spaces
  \begin{align*}
  \mathcal{K} :=
  \JI (\ker \JII)
  \subset \RR^{\ell_1},
  \qquad
  \mathcal{V} :=
  \{\symm(\barz \bd^{T\!}) : \bd \!\in\! \RR^m \}
  \subset \SS^m.
  \end{align*}
  and let
  $\mathcal{V}^\perp = \{S : S \barz\!=\!0\} \subset \SS^m$
  be the orthogonal complement of $\mathcal{V}$.
  Assume that $\alpha(\mathcal{K}) \subset \mathcal{V}$
  and $\image \alpha \supset \mathcal{V}^\perp $.
  Then the restricted Slater condition holds.
\end{lemma}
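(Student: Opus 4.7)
The plan is to reduce restricted Slater to a statement about $\lamI$ alone, then to produce the required $\lamI$ by starting from a ``raw'' preimage of an element of $\mathcal{V}^\perp$ and correcting it by a $\mathcal{K}$-element that is invisible on $(\barz)^\perp$.

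First I would use the partition rule~\eqref{eq:partition} to note that, for every $\blambda=(\lamI,\lamII)\in\Lambda$, the Hessian $\mathcal{A}(\blambda)$ collapses to $\alpha(\lamI)$. Hence restricted Slater is equivalent to finding $\lamI$ in the projection $\pi_{\mathrm{I}}(\Lambda)$ with $\alpha(\lamI)|_{(\barz)^\perp}\succ 0$. A short duality computation identifies this projection: $\blambda\in\Lambda$ reads $\JI^T\lamI+\JII^T\lamII=0$, so eliminating $\lamII$ gives $\pi_{\mathrm{I}}(\Lambda)=\{\lamI:\JI^T\lamI\in\image\JII^T\}$, which is exactly $\mathcal{K}^\perp$ because $\mathcal{K}=\JI(\ker\JII)$ dualizes to $\mathcal{K}^\perp=\{\lamI:\JI^T\lamI\perp\ker\JII\}$.

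Next I would record the crucial vanishing property of $\mathcal{V}$: for any $\bv\in(\barz)^\perp$ and any $\bd\in\RR^m$, $\bv^T\symm(\barz\bd^T)\bv=(\barz^T\bv)(\bd^T\bv)=0$, so every element of $\mathcal{V}$ restricts to zero on $(\barz)^\perp$. Using $\image\alpha\supset\mathcal{V}^\perp$, I would pick an $S_0\in\mathcal{V}^\perp$ that is strictly positive definite on $(\barz)^\perp$ (for instance $S_0:=\id_m-\barz\barz^T/\|\barz\|^2$) together with a preimage $\lamI_0$ satisfying $\alpha(\lamI_0)=S_0$.

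Finally I would split $\lamI_0=\lamI_0^\perp+\lamI_0^\parallel$ orthogonally along $\RR^{\ell_1}=\mathcal{K}^\perp\oplus\mathcal{K}$ and set $\lamI:=\lamI_0^\perp\in\mathcal{K}^\perp$. By $\alpha(\mathcal{K})\subset\mathcal{V}$, the correction $\alpha(\lamI_0^\parallel)$ lies in $\mathcal{V}$ and therefore vanishes on $(\barz)^\perp$. Hence $\alpha(\lamI)|_{(\barz)^\perp}=S_0|_{(\barz)^\perp}\succ 0$, and any lift of $\lamI\in\mathcal{K}^\perp=\pi_{\mathrm{I}}(\Lambda)$ to some $\blambda\in\Lambda$ provides the desired multiplier. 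The only step with any real content is the duality identification $\pi_{\mathrm{I}}(\Lambda)=\mathcal{K}^\perp$; the two hypotheses are tailored precisely so that removing the $\mathcal{K}$-component of $\lamI_0$ is harmless on $(\barz)^\perp$.
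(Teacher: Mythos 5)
Your proof is correct, and it arrives at the same pivotal fact as the paper by a complementary (primal rather than dual) route. The paper writes $\mathcal{A}=\alpha\circ\pi$ for the projection $\pi:\Lambda\to\RR^{\ell_1}$, proves $\ker\pi^*=\mathcal{K}$ using the abstract identity $\ker(L_2\circ L_1)=\ker L_1\oplus L_1^*(\image L_1\cap\ker L_2)$, and then takes orthogonal complements to conclude $\image\mathcal{A}\supset\image\alpha\cap(\alpha(\mathcal{K}))^\perp\supset\mathcal{V}^\perp$, so that some $\blambda\in\Lambda$ attains an element of $\mathcal{V}^\perp$ (such as $\id_m-\barz\barz^T$) \emph{exactly}. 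Your identification $\pi_{\mathrm{I}}(\Lambda)=\mathcal{K}^\perp$ is precisely the primal restatement of $\ker\pi^*=\mathcal{K}$ (since $\image\pi=(\ker\pi^*)^\perp$), obtained by directly eliminating $\lamII$ from $\JI^T\lamI+\JII^T\lamII=0$; you then build the multiplier concretely by taking a preimage of $S_0=\id_m-\barz\barz^T$ under $\alpha$ and discarding its $\mathcal{K}$-component. This produces $\mathcal{A}(\blambda)=S_0$ only modulo $\mathcal{V}$, so you need the extra (correct) observation that every $\symm(\barz\bd^T)$ has vanishing quadratic form on $(\barz)^\perp$ --- a step the paper does not need because its conclusion is exact membership of $S_0$ in $\image\mathcal{A}$. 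The trade-off is that your argument is more elementary and constructive, avoiding the adjoint bookkeeping, at the cost of a slightly weaker intermediate statement; both hypotheses $\alpha(\mathcal{K})\subset\mathcal{V}$ and $\image\alpha\supset\mathcal{V}^\perp$ are used soundly, so the proof goes through.
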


\begin{proof}
  We view $\mathcal{A} : \Lambda \to \SS^m$ as a linear map with domain~$\Lambda$.
Let
$\pi : \Lambda \to \RR^{\ell_1}$
be the projection
$(\lamI,\lamII) \mapsto \lamI$.
Note that $\mathcal{A} = \alpha\circ \pi$ by construction.
Let $\mathcal{A}^*,\alpha^*,\pi^*$ be the adjoint operators of $\mathcal{A},\alpha,\pi$.
We claim that
\begin{align}\label{eq:Astar}
  \ker \mathcal{A}^*
  \;\subset\;
  \ker \alpha^* \oplus \alpha(\ker \pi^*)
  \qquad\text{ and }\qquad
  \ker \pi^* \;=\; \mathcal{K}.
\end{align}
By taking the orthogonal complement of the first equation, we conclude that
\begin{align*}
  \image \mathcal{A}
  \;\supset\;
  \image \alpha \cap (\alpha(\ker \pi^*))^\perp
  \;=\;
  \image \alpha \cap (\alpha(\mathcal{K}))^\perp
  \;\supset\;
  \mathcal{V}^\perp.
\end{align*}
The above relation implies the restricted Slater condition,
Hence, it suffices to show~\eqref{eq:Astar}.

Since $\mathcal{A}^* = \pi^*\circ \alpha^*$,
the first equation in~\eqref{eq:Astar} is a consequence of the following identity:
\begin{align}\label{eq:kernelComposition}
  \ker (L_2\circ L_1)
  \;=\; \ker L_1 \oplus L_1^*(\image L_1 \cap \ker L_2)
  \quad\text{ for linear maps }L_1,L_2.
\end{align}
This identity follows by applying the rank-nullity theorem ($\operatorname{domain} L = \ker L \oplus L^*(\image L)$) to the linear maps $L_1$, $L_2|_{\image L_1}$, and $L_2\circ L_1$.

It remains to show that $\ker \pi^* = \JI (\ker \JII).$
Let $i : \Lambda \to \RR^{\ell}$ be the inclusion and $\rho : \RR^{\ell} \to \RR^{\ell_1}$ be the projection $\blambda\mapsto \lamI$,
so that $\pi = \rho\circ i$.
Notice that $\ker \rho^* = 0$ since $\rho$ is surjective.
Also note that $\image i = \ker J^*$ since $\Lambda$ is defined by the equation $\blambda^T J=0$, and hence $\ker i^* = \image J$.
As $\pi^* = i^*\circ \rho^*$, the identity~\eqref{eq:kernelComposition} gives
\begin{align*}
  \ker \pi^* = \ker \rho^* \oplus \rho(\image \rho^* \cap \ker i^*)
  = \rho(\image \rho^* \cap \image J).
\end{align*}
Observe that $\image \rho^*= \RR^{\ell_1}\times \{0\}$.
It follows that $\bw\in \ker \pi^*$ if and only if there is some $\bx$ such that $\bw=\JI \bx$ and $\JII \bx = 0$, as wanted.
\end{proof}

We proceed to prove \Cref{thm:lownoise}.
In our case, the constraints $\textbf{h}_\btheta(\bx)$ are given in~\eqref{eq:qcqpconstraints}.
The variables are $\bx \!= \!(\bz \Vert \by)$, $\bz \!\in\! \RR^m$, $\by \!\in\! \RR^{m n}$.
We need to verify that the four conditions from \Cref{thm:stability} are satisfied.
We already showed in \Cref{thm:ACQ} that ACQ holds.
We proceed to analyze the remaining three conditions.

\subsection{Smoothness}

Let $\mathcal{X}_\btheta \!\subset\! \RR^N$ be the variety from~\eqref{eq:qcqpconstraints}
and let $\mathcal{W}_\btheta \!\subset\! \RR^{m+k}$ be the variety from~\eqref{eq:W}.
Consider the sets
\begin{align*}
  \mathcal{M}_X \,:=\,
  \{(\btheta,\bx) : \bx\!\in\! \mathcal{X}_\btheta\}
  \,\subset\, \RR^k{\times}\RR^N,
  \qquad
  \mathcal{M}_W \,:=\,
  \{(\btheta,\bw) : \bw\!\in\! \mathcal{W}_\btheta\}
  \,\subset\, \RR^k{\times}\RR^{m+k}.
\end{align*}
We need to show that $\mathcal{M}_X$ is a smooth manifold nearby $(\bartheta,\barx)$.
It is easier to first analyze~$\mathcal{M}_W$.
For the nominal parameter $\bartheta$,
we showed in \Cref{thm:ACQ0} that $\mathcal{W}_{\bartheta}$ satisfies ACQ at~$\barw$.
Hence, $\mathcal{W}_{\bartheta}$ is a smooth manifold nearby~$\barw$.
Since $\mathscr{S}_\btheta(\bv) \!=\! \mathscr{S}(\bv{+}\btheta)$,
then $\mathcal{W}_\btheta$ is an affine translation of~$\mathcal{W}_{\bartheta}$.
More precisely,
$ \mathcal{W}_\btheta = \mathcal{W}_{\bartheta} - \bw_\btheta$,
where $\bw_\btheta \!:=\! (0, \btheta{-}\bartheta) \!\in\! \RR^m {\times} \RR^k$.
It follows that $\mathcal{M}_W$ is a smooth manifold nearby $(\bartheta, \barw)$.

Note that $\mathcal{X}_\btheta$ is the image of~$\mathcal{W}_\btheta$ under the map
$\RR^m{\times}\RR^k \!\to\! \RR^m {\times} \RR^{m n}$,
$(\bz,\bv) \!\mapsto\! (\bz, \bz {\otimes} \bv)$.
In fact, this map gives an isomorphism of varieties,
as showed in the proof of \Cref{thm:ACQ}.
Therefore,
$\mathcal{M}_X$ is the image of $\mathcal{M}_W$
under the map
$(\btheta,\bz,\bv) \!\mapsto\! (\btheta, \bz, \bz{\otimes}\bv)$.
It follows that $\mathcal{M}_X$ is a smooth manifold nearby $(\bartheta, \barx)$.

\subsection{Branch point}
From now on we will only focus on the nominal parameter~$\bartheta$, and hence we will no longer write the~$\btheta$-subindices.
In order to show that $\barx$ is not a branch point, we will first provide an explicit formula for the Jacobian $\nabla \mathbf{h}(\barx)$.

We will identify $\RR^N$ with the tensor product $\RR^{k+1}\!\otimes \RR^m$.
In particular, use the following bases for $\RR^{k+1}$, $\RR^m$, and $\RR^N$:
\begin{equation*}
\begin{gathered}
  \{{\bbf}_{j_1}\}_{j_1=0}^k \,\subset\, \RR^{k+1}, \;\;
  \{ \bd_{j_2}\}_{j_2=1}^m \,\subset\, \RR^m \;\;
  \text{are the canonical bases,}\\
  \{\be_{j}\}_{j} \,:=\, \{\bbf_{j_1}\otimes \bd_{j_2}\}_{j=(j_1,j_2)}
  \,\subset\, \RR^{k+1}\!\otimes \RR^m \,=\, \RR^N.
\end{gathered}
\end{equation*}
Recall from~\eqref{eq:Stheta} that the map $\mathscr{S}_{\bartheta}$
can be written in terms of matrices
$A, B_1,\dots,B_k \!\in\! \RR^{m\times n}$.
For $i \!\in\! [n]$,
let $\ba_i,\bb_{i 1},\dots,\bb_{i k} \!\in\! \RR^m$
denote the $i$-th columns of $A, B_1,\dots,B_k$.
Observe that $\bs_i = (\ba_i \Vert \bb_{i1} \Vert \dots \Vert \bb_{ik})$.
In terms of the bases from above, we have
\begin{align} \label{eq:ai}
\ba_i,\bb_{i 1},\dots,\bb_{i k}\in\RR^m
\quad\text{ are such that }\quad
\bs_i = \bbf_0 \otimes \ba_i + \sum_{t =1}^k \bbf_{t} \otimes \bb_{i t} \in \RR^N.
\end{align}

The next lemma gives a formula for (some of) the rows of $\nabla \mathbf{h}(\barx)$.

\begin{lemma}\label{thm:limitJ}
  Let $\barx=(\barz,0)$,
  and consider the constraints from~\eqref{eq:qcqpconstraints}.
  Then
  $$\nabla h_0(\barx) = 2\bbf_0\otimes \barz,$$
  and for any $i$ and $j=(j_1,j_2)$ we have
  \begin{align*}
    \nabla h_{ij}(\barx)= \begin{cases}
      \frac{1}{2}\barz_{j_2}(\bbf_{j_1}\otimes \ba_i),& \text{ if }j_1\neq 0
      \\
      \barz_{j_2}(\bbf_0\otimes \ba_i) + \sum_{t =1}^k \bbf_{t}\otimes \symm(\bb_{i t} \bd_{j_2}^T)\barz,
       & \text{ if }j_1= 0
    \end{cases}
  \end{align*}
  where $\symm$ stands for the symmetric part.
\end{lemma}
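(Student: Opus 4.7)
The plan is a direct calculation exploiting three facts: the symmetry of $\Sym(\bs_i\be_j^T)$, the sparse support of $\barx = \bbf_0 \otimes \barz$, and the relation $\ba_i^T \barz = 0$, which holds because $\barz$ lies in the left kernel of $A = \mathscr{S}(\bartheta)$.

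For $h_0 = \bz^T\bz - 1$ the formula is immediate, since only the $\bbf_0$-block of $\bx$ enters the expression. For $h_{ij}(\bx) = \bx^T \Sym(\bs_i\be_j^T)\bx$, the symmetry of $\Sym(\bs_i\be_j^T)$ gives $\nabla h_{ij}(\barx) = 2\,\Sym(\bs_i\be_j^T)\,\barx$. Because $\barx$ is supported only in its $\bbf_0$-block, the result is twice the leftmost block column of $\Sym(\bs_i\be_j^T)$ applied to $\barz$. So the main task is to compute this block column. Using~\eqref{eq:ai} with $\be_j = \bbf_{j_1}\otimes \bd_{j_2}$, the matrix $\bs_i\be_j^T$ has all of its nonzero $m\times m$ blocks in the $j_1$-th block column: the $(0,j_1)$ block equals $\ba_i\bd_{j_2}^T$, and the $(t,j_1)$ block equals $\bb_{it}\bd_{j_2}^T$ for $t=1,\dots,k$. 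Block-symmetrizing (first averaging with the transpose, then symmetrizing each $m\times m$ block) yields a matrix whose leftmost block column splits into two cases.

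If $j_1\neq 0$, the only surviving block in the leftmost column sits at position $(j_1,0)$ and equals $\tfrac{1}{2}\symm(\ba_i\bd_{j_2}^T)$. Applying $\barz$ and invoking $\ba_i^T\barz=0$ leaves $\tfrac14\barz_{j_2}\ba_i$, and doubling recovers the claimed $\tfrac12\barz_{j_2}(\bbf_{j_1}\otimes \ba_i)$. If $j_1 = 0$, the leftmost column has $\symm(\ba_i\bd_{j_2}^T)$ in the $(0,0)$ slot and $\tfrac12\symm(\bb_{it}\bd_{j_2}^T)$ in each $(t,0)$ slot for $t\geq 1$. Applying $\barz$ again kills the $(\ba_i^T\barz)\bd_{j_2}$ piece of the $(0,0)$ contribution, producing $\tfrac12\barz_{j_2}\ba_i$ in the $\bbf_0$-slot and $\tfrac12\symm(\bb_{it}\bd_{j_2}^T)\barz$ in each $\bbf_t$-slot; doubling gives the second line of the lemma.

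The only subtle bookkeeping is the behaviour of the diagonal $(j_1,j_1)$ block under block symmetrization, which is fully symmetrized rather than halved. This block does not appear in the leftmost column except when $j_1=0$, in which case it is already absorbed into the $(0,0)$ computation above. I would expect this step — tracking which blocks survive averaging and how the scaling factors compose — to be the most error-prone part, although nothing is mathematically deep; the pivotal algebraic input is simply $\ba_i^T\barz = 0$.
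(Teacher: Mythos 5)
Your proof is correct and follows essentially the same route as the paper: both reduce to computing $2\Sym(\bs_i\be_j^T)\barx$, exploit that $\barx=\bbf_0\otimes\barz$ selects only the leftmost block column, and use $\ba_i^T\barz=0$ together with careful tracking of the factors of $\tfrac12$ introduced by block symmetrization (including the special behaviour of the diagonal block when $j_1=0$). The only difference is presentational: the paper packages the block bookkeeping into the identity $\Sym\bigl((\bbf\otimes\ba)(\bg\otimes\bb)^T\bigr)(\bu\otimes\bz)=\symm(\bbf\bg^T)\bu\otimes\symm(\ba\bb^T)\bz$ applied to the decomposition $\bs_i=\bbf_0\otimes\ba_i+\sum_t\bbf_t\otimes\bb_{it}$, whereas you carry out the same computation block by block.
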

\begin{proof}
  Since $\barx=\bbf_0\otimes \barz$, then $\nabla h_0(\barx) = 2 \barx = 2\bbf_0\otimes \barz$.
  To compute $\nabla h_{ij}$ we make use of the following identity:
  {
  \begin{align*}
    \Sym\bigl((\!\bbf{\otimes} \ba)(\bg{\otimes} \bb)^{\!T} \bigr)\,(\!u{\otimes} \bz\!) =
    \symm(\bbf \bg^{T\!})u\otimes \symm(\ba \bb^{T\!})\bz
    \;\;\text{for}\;\; \ba,\bb,\bz\!\in\!\RR^{m\!},\, \bbf,\bg,\bu\!\in\! \RR^{k{+}1\!}.
  \end{align*}
  }\!\!\!
  The above identity can be checked by straightforward manipulation.
  It follows that
  \begin{align*}
    2\nabla h_{ij}(\barx)
    &= 4 \symm(\bbf_0\bbf_{j_1}^T)\bbf_0\otimes \symm(\ba_i \bd_{j_2}^T)\barz
    + 4 \sum_{t=1}^k \symm(\bbf_{t} \bbf_{j_1}^T)\bbf_0\otimes \symm(\bb_{i t} \bd_{j_2}^T)\barz
    \\[-3pt]
    &= \barz_{j_2}(\bbf_{j_1}+\bbf_0(\bbf_{j_1}^T \bbf_0))\otimes \ba_i
    + 2 \sum_{t=1}^k ( \bbf_{j_1}^T \bbf_0)\bbf_{t}\otimes \symm(\bb_{i t} \bd_{j_2}^T)\barz.
  \end{align*}
  Reducing the above expression we get the formula in the lemma.
\end{proof}

In order to make the following proofs more explicit,
we will introduce a new assumption, which is \emph{weaker} than \Cref{ass:transversality}.

\begin{assumption} \label{ass:spanai}
  The vectors $\{\ba_i\}_i$ as in~\eqref{eq:ai} span $(\barz)^\perp$.
\end{assumption}
\begin{remark}[Assum1 $\!\implies\!$ Assum2]
  Recall that
  $\mathscr{S}_{\bar\btheta}(0) = A = (\ba_1\;\,\ba_2\;\,\cdots\;\,\ba_n)$.
  Under \Cref{ass:transversality}, this matrix has rank $m{-}1$.
  Since $\barz^T \mathscr{S}_{\bar\btheta}(0)=0$, \Cref{ass:spanai} holds.
\end{remark}

We now verify the branch point condition.

\begin{lemma} \label{thm:branchpoint}
  Under \Cref{ass:spanai}, the right kernel of $\nabla_z \mathbf{h}(\barx)$ is trivial.
\end{lemma}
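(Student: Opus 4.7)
The plan is to extract the $\bz$-columns of $\nabla\mathbf{h}(\barx)$ explicitly, using the formulas in \Cref{thm:limitJ}, and then reduce the kernel condition to a statement that follows directly from \Cref{ass:spanai}. Writing the ambient space as $\RR^N = \RR^{k+1}\otimes \RR^m$ with basis $\{\bbf_{j_1}\otimes \bd_{j_2}\}$, the variable $\bz$ corresponds to the block indexed by $\bbf_0$. So $\nabla_z h(\barx)$ is obtained by reading off the $\bbf_0$-component of $\nabla h(\barx)\in\RR^N$.

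Using \Cref{thm:limitJ}: the row coming from $h_0$ gives $\nabla_z h_0(\barx) = 2\barz$; the rows from $h_{ij}$ with $j=(j_1,j_2)$ and $j_1\neq 0$ contribute \emph{zero} to the $\bbf_0$-block; and the rows from $h_{ij}$ with $j=(0,j_2)$ give $\nabla_z h_{i,(0,j_2)}(\barx) = \barz_{j_2}\ba_i$. For the Kronecker-product constraints $h_l$, note that $\barx = \bbf_0\otimes \barz$ is supported in the $\bbf_0$-block; a brief case check on $l=((i_1,j_1),(i_2,j_2),(i_1,j_2),(i_2,j_1))$ with $i_1<i_2$ shows that when $i_1>0$ every factor vanishes at $\barx$, and when $i_1=0$ the nonzero entries of $\nabla h_l(\barx)$ sit in the $\bbf_{i_2}$-block (with $i_2>0$). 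In either case $\nabla_z h_l(\barx)=0$.

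Hence the right kernel of $\nabla_z\mathbf{h}(\barx)$ is exactly
\begin{equation*}
  \{\bv\in\RR^m : \barz^T\bv=0 \text{ and } \barz_{j_2}\,\ba_i^T\bv = 0 \text{ for all } i\in[n],\, j_2\in[m]\}.
\end{equation*}
Since $\barz$ is a unit vector, some coordinate $\barz_{j_2}$ is nonzero, which forces $\ba_i^T\bv = 0$ for every $i$. By \Cref{ass:spanai} the $\ba_i$ span $(\barz)^\perp$, so $\bv\in\spann(\barz)$; combined with $\barz^T\bv=0$ this gives $\bv=0$.

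I do not anticipate a real obstacle: the computation of the $\bbf_0$-block is immediate from \Cref{thm:limitJ}, and the only place that actually uses an assumption is the very last step, where \Cref{ass:spanai} is precisely what is needed to turn ``$\bv\perp\ba_i$ for all $i$'' into $\bv\in\spann(\barz)$. The mildly fiddly part is the case analysis for the Kronecker constraints $h_l$, but this is routine once one observes that $\barx$ is supported only in the $\bbf_0$-block.
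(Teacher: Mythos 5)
Your proof is correct and follows essentially the same route as the paper: read off the $\bbf_0$-block of the gradients from \Cref{thm:limitJ} to get the conditions $\barz^T\bv=0$ and $\barz_{j_2}\,\ba_i^T\bv=0$, then invoke \Cref{ass:spanai}. The only difference is that you explicitly verify that the Kronecker constraints $h_l$ contribute nothing to $\nabla_z\mathbf{h}(\barx)$, a case check the paper's proof silently omits; this is a welcome bit of extra care but not a different argument.
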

\begin{proof}
  Let $\bzeta \!\in\! \RR^m$ be in the right kernel.
  Then $\nabla_z h_0(\barx) \bzeta \!=\! 0$ and $\nabla_z h_{ij}(\barx) \bzeta \!=\! 0$ for all $i,j$.
  By \Cref{thm:limitJ}, we have $\barz^T \bzeta = 0$ and
  $\barz_{j_2} (\ba_i^T \bzeta) = 0$ for all $i,j_2$.
  Since $\|\barz\|=1$, some $\barz_{j_2}$ is nonzero.
  Then $\bzeta$ is orthogonal to each of $\barz,\ba_1,\dots,\ba_n$, so it must be zero.
\end{proof}

\subsection{Restricted Slater}\label{s:secondorder}

In order to use \Cref{thm:sufficientcondition},
we partition the equations into the two groups shown in \Cref{tab:multipliers}.
Group~$\mathrm{I}$ corresponds to the first column
and group~$\mathrm{II}$ to the remaining three columns.
The table also gives the multipliers associated to each constraint.
The complete multiplier vector $\blambda$ consists of all $\mu_{ij}, \sigma_l, \gamma$.

\begin{table}[htbp]
  \centering
  \caption{Partition of the equations into two groups.}
    \begin{tabular}{ccccc}
    \toprule
    group   & I     & II    & II   & II \\
    \midrule
    multiplier & $\mu_{ij}$ & $\mu_{ij}$ & $\sigma_l$ & $\gamma$ \\
    equation & $h_{ij} = \bx^T\Sym(\bs_i \be_{j}^T)\bx$ & $h_{ij} = \bx^T\Sym(\bs_i \be_j^T)\bx$ & $h_{l} = x_{l_1}x_{l_2} \!-\! x_{l_3}x_{l_4}$ & $h_0 = \bz^T \bz \!-\! 1$ \\
    indices & $i\in [n],\, j\in \{0\}\times [m]$ & $i\in[n],\, j\in [k]\times [m]$ & $l \in L$ &  \\
    \bottomrule
    \end{tabular}
  \label{tab:multipliers}
\end{table}

Let us see that group~$\mathrm{II}$ satisfies~\eqref{eq:partition}.
It can be checked that $ \nabla^2_{zz} h \!=\! 0 $
for the equations $h$ in the second and third column.
As for the fourth column,
let $\blambda \!\in\! \Lambda$ and let us show that $\gamma \!=\! 0$.
Since $\blambda \!\in\! \Lambda$, then
$\blambda^T \nabla \mathbf{h}(\barx) \barx = 0$.
Note that all the equations, except for $h_0$, are homogeneous,
so they satisfy $ \nabla h(\barx) \barx = 2 h(\barx) = 0$.
It follows that
$\blambda^T \nabla \mathbf{h}(\barx) \barx
= \gamma\, \nabla h_0(\barx)\barx = 2 \gamma$,
and hence $\gamma\!=\!0$.

We proceed to compute the linear map $\alpha$ from~\eqref{eq:restriction}.
To do so, let
\begin{gather*}
  \{\bbf_{j_1}\}_{j_1=0}^k\subset \RR^{k+1}, \;\;
  \{ \bd_{j_2}\}_{j_2=1}^m \subset \RR^m, \;\;
  \{\bg_i\}_{i=1}^n\subset \RR^n \;\;
  \text{be the canonical bases.}
\end{gather*}
Group~$\mathrm{I}$ consists of $\ell_1 \!=\! n \!\times\! m$ equations:
$h_{ij}$ for $i \!\in\! [n]$ and $j \!\in\! \{0\} {\times} [m]$.
It follows from \Cref{thm:limitJ} that
$\nabla^2_{zz} h_{ij} = \tfrac{1}{2} \symm(\ba_i \bd_{j_2}^T)$.
We may view the multiplier vector $\lamI$ as an element of
$ \RR^n \!\otimes\! \RR^m $.
Then the linear map $\alpha$ satisfies
\begin{align}\label{eq:restriction2}
  \alpha : \RR^n{\otimes} \RR^m \to \SS^m,
  \qquad
  \bg_{i} {\otimes} \bd \mapsto \tfrac{1}{2} \symm(\ba_{i} \bd^T)
  \quad\text{ for any } i\!\in\![n],\; \bd\!\in\! \RR^m.
\end{align}

We now compute the linear space
$\mathcal{K} := \JI (\ker \JII) \subset \RR^n \!\otimes\! \RR^m$.

\begin{lemma}\label{thm:J1kerJ2}
  Under \Cref{ass:spanai}, then $\JI(\ker \JII) \subset \RR^n\otimes \{\barz\}$.
\end{lemma}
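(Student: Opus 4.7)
My plan is to make everything explicit in the tensor basis $\{\bbf_{j_1}\otimes \bd_{j_2}\}$ introduced in \Cref{s:secondorder}, use \Cref{thm:limitJ} to read off the rows of $\JI$ and $\JII$, and then chase what the defining equations force.

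First I would parametrize an arbitrary $\bx\in\RR^N$ as $\bx=\sum_{t=0}^k \bbf_t \otimes \bx_t$ with $\bx_t\in\RR^m$, and translate each row of $\JII$ into a condition on the components $\bx_t$. From \Cref{thm:limitJ}, orthogonality to $\nabla h_0(\barx)=2\bbf_0\otimes\barz$ gives $\barz^T\bx_0=0$, and orthogonality to $\nabla h_{ij}(\barx)=\tfrac{1}{2}\barz_{j_2}(\bbf_{j_1}\otimes \ba_i)$ for $j_1\in[k]$ gives $\barz_{j_2}(\ba_i^T\bx_{j_1})=0$ for all $i,j_2$. Since $\barz\neq 0$, this forces $\ba_i^T \bx_{j_1}=0$ for every $i\in[n]$ and every $j_1\in[k]$; by \Cref{ass:spanai}, the vectors $\{\ba_i\}_i$ span $(\barz)^\perp$, so $\bx_{j_1}\in\spann(\barz)$ for each $j_1\in[k]$. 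Write $\bx_{j_1}=c_{j_1}\barz$.

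Next I would dispose of the minor constraints $h_l$. A quick calculation shows $\nabla h_l(\barx)$ is nonzero only when the quadruple $l$ has $i_1=0$, in which case it equals $\bbf_{i_2}\otimes(\barz_{j_1}\bd_{j_2}-\barz_{j_2}\bd_{j_1})$. The inner product with $\bx$ reduces to $\barz_{j_1}(\bx_{i_2})_{j_2}-\barz_{j_2}(\bx_{i_2})_{j_1}$, which vanishes automatically once $\bx_{i_2}=c_{i_2}\barz$. Thus the minor rows impose no new condition, and $\ker \JII$ is exactly the set of $\bx=\bbf_0\otimes \bx_0+\sum_{j_1=1}^k c_{j_1}(\bbf_{j_1}\otimes\barz)$ with $\barz^T\bx_0=0$.

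Finally I would apply $\JI$ to such an $\bx$. The rows of $\JI$ are $\nabla h_{ij}(\barx)$ for $j=(0,j_2)$, and by \Cref{thm:limitJ} this equals $\barz_{j_2}(\bbf_0\otimes \ba_i)+\sum_{t=1}^k \bbf_t\otimes \symm(\bb_{it}\bd_{j_2}^T)\barz$. Pairing with $\bx$ and using the identity $\barz^T\symm(\bb_{it}\bd_{j_2}^T)\barz=\barz_{j_2}(\barz^T\bb_{it})$, the $j_2$-dependence cleanly factors out:
\begin{align*}
  (\nabla h_{ij}(\barx))^T \bx \;=\; \barz_{j_2}\Bigl(\ba_i^T \bx_0 + \sum_{t=1}^k c_t\, \barz^T \bb_{it}\Bigr).
\end{align*}
Identifying $\RR^{\ell_1}\simeq \RR^n\otimes\RR^m$ via $(i,j_2)\mapsto \bg_i\otimes\bd_{j_2}$, this sum assembles to
$\JI \bx=\bigl(\sum_i(\ba_i^T\bx_0+\sum_t c_t\barz^T\bb_{it})\bg_i\bigr)\otimes \barz \in \RR^n\otimes\{\barz\}$, which is the claim. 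The only slightly delicate point is the symmetrization identity at the heart of the last computation; everything else is bookkeeping.
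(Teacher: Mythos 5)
Your proof is correct and follows essentially the same route as the paper: both use \Cref{thm:limitJ} to pin $\ker \JII$ inside $(\{\bbf_0\}\otimes\barz^{\perp})\oplus(\bbf_0^{\perp}\otimes\{\barz\})$ via \Cref{ass:spanai}, and then push a general element of that space through $\JI$ by direct computation. Your component-wise parametrization $\bx=\sum_t \bbf_t\otimes\bx_t$ is in fact slightly cleaner than the paper's simple-tensor decomposition of the cross term, and your explicit check that the minor constraints $h_l$ impose nothing new is a harmless extra (the paper simply omits them, since only an upper bound on $\ker\JII$ is needed).
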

\begin{proof}
  Let
  $\mathcal{L}_1 := \RR^{k+1}\otimes \{\barz\}$,
  $\mathcal{L}_2 := \{\bbf_0\}\otimes \RR^m$.
  We will first show that
  \begin{align} \label{eq:kerJII}
    \ker \JII
    \quad\subset\quad
    (\bbf_0^\perp \!\otimes\! \{\barz\}) \oplus (\{\bbf_0\} \!\otimes\! \barz^{\perp})
    \quad\subset\quad
    \mathcal{L}_1 \oplus \mathcal{L}_2.
  \end{align}
  The second containment is clear.
  We proceed to prove the first one.
  Given $\bw\in \ker \JII$, we can write it in the form
  \begin{align*}
    \bw = \bl_1 \otimes \barz + \bbf_0\otimes \bl_2 + r \bbf_0\otimes \barz + \bbf\otimes \bzeta,
    \qquad\text{ for some }
    r\in \RR,\;
    \bl_1, \bbf\in \bbf_0^\perp,\;
    \bl_2, \bzeta\in \barz^{\perp}.
  \end{align*}
  It suffices to show that $r = 0 $ and $\bbf\otimes \bzeta = 0$.
  Since $\bw\in \ker \JII$, then
  $\nabla h_0(\barx) \bw = 0$ and also
  $\nabla h_{ij}(\barx) \bw = 0$
  for $j=(j_1,j_2)$ with $j_1> 0$.
  \Cref{thm:limitJ} gives formulas for these gradients.
  The equation $\nabla h_0(\barx) \bw = 0$ says that $r= 0$.
  Assume by contradiction that $\bbf\otimes \bzeta\neq 0$.
  The equations $\nabla h_{ij}(\barx) \bw = 0$ imply
  \begin{align*}
    \barz_{j_2} (\bbf^T \bbf_{j_1})(\bzeta^T \ba_i) = 0, \qquad \text{ for all } i,j_1,j_2.
  \end{align*}
  Since $\|\barz\|=1$ then some $\barz_{j_2}$ is nonzero.
  Also note that some $\bbf^T \bbf_{j_1}\neq 0$ since $\bbf\in \bbf_0^\perp \setminus \{0\}$.
  Similarly, some $\bzeta^T \ba_i\neq 0$ since $\barz^{\perp}= \spann\{\ba_i\}_i$.
  This is a contradiction.
  Hence, \eqref{eq:kerJII} holds.

  It remains to show that $\JI (\mathcal{L}_1)\subset \mathcal{H}$, $\JI(\mathcal{L}_2)\subset \mathcal{H}$,
  where $\mathcal{H} :=\RR^n\otimes \{\barz\}$.
  The rows of $\JI$ are $\nabla h_{ij}(\barw)$, where $j=(j_1,j_2)$, $j_1=0$.
  An explicit formula is given in \Cref{thm:limitJ}.
  Let $\bw = \bbf'\otimes \barz\in \mathcal{L}_1$, and let us see that $\JI \bw \in \mathcal{H}$.
  The $ij$-th entry of $\JI \bw$ is
  \begin{align*}
    (\bbf'\otimes \barz)^{T}
    ( \barz_{j_2}&(\bbf_0\otimes \ba_i) + \sum_{t=1}^k \bbf_{t}\otimes \symm(\bb_{i t} \bd_{j_2}^T)\barz)
    \\[-3pt]
    &=  \sum_{t=1}^k (\bbf'^T \bbf_{t})( \barz^T \symm(\bb_{i t} \bd_{j_2}^T)\barz )
    =  \sum_{t=1}^k \barz_{j_2}(\bbf'^T \bbf_{t})(\bb_{i t}^T \barz)
  \end{align*}
  where we used that $\ba_i^T \barz=0$.
  Then,
  \begin{align*}
    \JI \bw
    = \sum_{i,j_2,t} \barz_{j_2}( \bbf'^T \bbf_{t})( \bb_{i t}^T \barz) (\bg_i\otimes \bd_{j_2})
    = \sum_{i,t} ( \bbf'^T \bbf_{t})( \bb_{i t}^T \barz) (\bg_i\otimes \barz) \in \mathcal{H}.
  \end{align*}
  Consider now $\bw = \bbf_0\otimes \bzeta \in \mathcal{L}_2$.
  The $ij$-th entry of $\JI \bw$ is
  \begin{align*}
    (\bbf_0\otimes \bzeta)^{T}( \barz_{j_2}(\bbf_0\otimes \ba_i)
    + \sum_{t=1}^k \bbf_{t}\otimes \symm(\bb_{i t} \bd_{j_2}^T)\barz)
    =  \barz_{j_2}( \ba_i^T \bzeta)
  \end{align*}
  and thus
  $\JI \bw = \sum_{i} (\ba_i^T \bzeta) \bg_i\otimes \barz \in \mathcal{H}.  $
\end{proof}

We are ready to show the restricted Slater condition.

\begin{lemma}\label{thm:distincteigen0}
  Under \Cref{ass:spanai}, then the conditions from \Cref{thm:sufficientcondition} are satisfied.
  Hence, the restricted Slater condition holds.
\end{lemma}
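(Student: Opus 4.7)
The plan is to verify both hypotheses of \Cref{thm:sufficientcondition} and then invoke it. The two conditions to check are $\alpha(\mathcal{K}) \subset \mathcal{V}$ and $\image \alpha \supset \mathcal{V}^\perp$, with $\alpha$ as in~\eqref{eq:restriction2}. By \Cref{thm:J1kerJ2}, we already know the key containment $\mathcal{K} = \JI(\ker \JII) \subset \RR^n \otimes \{\barz\}$, so the first condition will be almost immediate; the real content is the second.

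For the first condition, any $\lamI \in \mathcal{K}$ has the form $\lamI = \sum_i c_i\, \bg_i \otimes \barz$, so
\[
  \alpha(\lamI) \,=\, \tfrac{1}{2}\sum\nolimits_i c_i\, \symm(\ba_i \barz^T).
\]
Since $\symm(\ba_i \barz^T) = \symm(\barz \ba_i^T) \in \mathcal{V}$, this gives $\alpha(\mathcal{K}) \subset \mathcal{V}$ immediately.

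The harder step is showing $\image \alpha \supset \mathcal{V}^\perp$. I would proceed as follows: a matrix $S \in \mathcal{V}^\perp$ is a symmetric matrix with $S\barz = 0$, so it is a self-adjoint operator supported on $\barz^\perp$. Spectrally decomposing on this subspace gives $S = \sum_{k=1}^{m-1} \lambda_k\, \bu_k \bu_k^T$ with each $\bu_k \in \barz^\perp$. Under \Cref{ass:spanai}, the family $\{\ba_i\}$ spans $\barz^\perp$, so each $\bu_k$ is a linear combination of the $\ba_i$'s, say $\bu_k = \sum_i r_{ki} \ba_i$. Then
\[
  \bu_k \bu_k^T \,=\, \symm\Bigl(\bu_k \bu_k^T\Bigr) \,=\, \sum\nolimits_i r_{ki}\, \symm(\ba_i \bu_k^T),
\]
which displays $\bu_k \bu_k^T$ as a linear combination of generators of $\image \alpha$. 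Summing in $k$ shows $S \in \image \alpha$, which is the desired inclusion.

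With both hypotheses of \Cref{thm:sufficientcondition} verified, the restricted Slater condition follows. The potential obstacle is the last step—one must be careful that the spectral decomposition on $\barz^\perp$ actually yields vectors in $\barz^\perp$ and that \Cref{ass:spanai} (rather than only \Cref{ass:transversality}) suffices here; both are straightforward once set up correctly, since $S\barz = 0$ together with $S = S^T$ forces the range of $S$ to lie in $\barz^\perp$.
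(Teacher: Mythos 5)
Your proposal is correct and follows essentially the same route as the paper: both verify the two hypotheses of \Cref{thm:sufficientcondition} by combining the formula~\eqref{eq:restriction2} for $\alpha$ with the containment $\mathcal{K}\subset\RR^n\otimes\{\barz\}$ from \Cref{thm:J1kerJ2} and the spanning property of \Cref{ass:spanai}. The only difference is that you spell out, via the spectral decomposition of $S\in\mathcal{V}^\perp$ on $\barz^\perp$, the step $\image\alpha\supset\mathcal{V}^\perp$ that the paper leaves implicit.
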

\begin{proof}
  The formula for $\alpha$ from~\eqref{eq:restriction2}
  implies that
  \begin{align*}
    \image \alpha \,=\,
    \left\{ \symm( \bzeta\, \bd^T ) :
    \bzeta \!\in\! (\barz)^\perp, \bd \!\in\! \RR^m \right\},
    \quad
    \alpha(\RR^n \!\otimes\! \{ \barz \} )
    \,=\,
    \left\{ \symm(\bzeta\, \barz^T ) :
    \bzeta \!\in\!  (\barz)^\perp) \right\}.
  \end{align*}
  The first equation implies that $\image \alpha \supset \mathcal{V}^\perp$.
  Since $\mathcal{K} \subset \RR^n {\otimes} \{ \barz \}$ by \Cref{thm:J1kerJ2},
  then the second equation implies that
  $\alpha( \mathcal{K} ) \subset \mathcal{V}$.
  So the assumptions of \Cref{thm:sufficientcondition} hold.
\end{proof}

\section{Applications}\label{s:applications}

We implemented our SDP relaxation using \cvx~\cite{cvx}.
The code is available in the personal website of the author:
\url{http://www.mit.edu/~diegcif/}.
Although not the main focus of this paper,
our implementation allows problems with complex matrices,
weighted norms, and missing entries.
We proceed to illustrate the performance of our relaxation in various applications.
The code needed to replicate the experiments is also available in the personal website of the author.
We used the SDP solver \mosek with the default parameters for all experiments.

\subsection{Hankel structure}

Consider the problem of computing the nearest rank deficient Hankel matrix.
This structure is prevalent in applications from systems theory and control,
due to the fact that a superposition of $r$~exponential signals corresponds to an infinite Hankel matrix of rank~$r$.
By suitably choosing a finite submatrix we obtain a rank deficient Hankel matrix.
Some concrete applications are approximate realization, system identification, noisy deconvolution, and stochastic realization~\cite{Lemmerling2001,Markovsky2011,Markovsky2008}.

For our first experiment we consider the Hankel approximation problem in an abstract setting (without referencing any application),
and illustrate the ``global'' behavior of our method.
By global we mean that the data vector $\btheta$ is sampled from the uniform distribution on the unit sphere.
The left of \Cref{tab:hankel} illustrates the performance of our SDP relaxation for small values of~$m,n$.
The numbers in the table indicate the percentage of experiments for which SDP solved the problem exactly (returning an optimality certificate).
For each $m,n$ we used 2000 uniformly random vectors~$\btheta$.
Observe that the SDP was exact for all instances with $n\leq 6$.

For comparison, the right of \Cref{tab:hankel} illustrates the performance of local optimization using the software \slra~\cite{slra}.
We use the Levenberg-Marquardt algorithm (LMA) as suggested in the manual,
using the initialization scheme from~\cite{VanHuffel1994}
(gives better results than the default initialization).
As expected, the accuracy of local optimization is lower than that of SDP.
Moreover, even when the local method is exact, it is hard to certify that it indeed converged to the global optimal.
In particular, we can only verify that the local method succeeded for the instances for which SDP also succeeds.

\begin{table}[htb]
  \centering
  \caption{Nearest rank deficient Hankel matrix for random instances.}
  \label{tab:hankel}
  \setlength{\tabcolsep}{3pt}
    \begin{tabular}{c||*{8}{c}||*{8}{c}}
      \multicolumn{1}{c}{}&\multicolumn{8}{c}{SDP succeeds}&\multicolumn{8}{c}{LMA and SDP succeed}
    \\
    \toprule
    $m\backslash n$
    & 3 & 4 & 5 & 6 & 7 & 8 & 9 & 10
    & 3 & 4 & 5 & 6 & 7 & 8 & 9 & 10
    \\ \midrule
    3
    & 100\pct & 100\pct & 100\pct & 100\pct & 100\pct & 100\pct &  99\pct &  99\pct
    &  95\pct &  89\pct &  83\pct &  80\pct &  76\pct &  74\pct &  71\pct &  70\pct
    \\
    4
    &         & 100\pct & 100\pct & 100\pct &  97\pct &  92\pct &  85\pct &  79\pct
    &         &  93\pct &  85\pct &  79\pct &  73\pct &  66\pct &  58\pct &  52\pct
    \\
    5
    &         &         & 100\pct & 100\pct &  99\pct &  95\pct &  87\pct &  77\pct
    &         &         &  93\pct &  84\pct &  78\pct &  71\pct &  62\pct &  53\pct
    \\
    6
    &         &         &         & 100\pct & 100\pct &  98\pct &  91\pct &  81\pct
    &         &         &         &  92\pct &  81\pct &  75\pct &  66\pct &  58\pct
    \\
    7
    &         &         &         &         & 100\pct & 100\pct &  97\pct &  89\pct
    &         &         &         &         &  90\pct &  79\pct &  73\pct &  63\pct
    \\ \bottomrule
    \end{tabular}
\end{table}

The fraction of instances for which SDP is exact should diminish as $m,n\to\nobreak \infty$.
Nevertheless, SDP always behaves well in the low noise regime, as we showed in \Cref{thm:lownoise}.
The following example illustrates this behavior.

\begin{example}[Approximate realization]\label{ex:noisyhankel}
  Consider the discrete linear time invariant (LTI) system with transfer function
  $\frac{z-1}{z^2 - 1.6 z + 0.8}$.
  Let $\by=(y_1,y_2,y_3,\dots)$ be the impulse response of the system,
  and let $H_{n}(\by)$ be the $3\times n$ Hankel matrix with the first entries of~$\by$.
  The rank of $H_n(\by)$ is two (the order of the system).
  Assume now that the signal $\by$ is corrupted by a Gaussian noise, and hence $H_n(\by)$ is full rank.
  The approximate realization problem consists in finding the nearest vector $\bu$ such that $H_n(\bu)$ has rank two.
  \Cref{tab:hankelnoisy} shows the performance of SDP for $n\!=\!40$ and for different standard deviations of the noise.
  The numbers are the percentage of experiments which are solved exactly over $200$ repetitions.
  Observe that SDP solves all instances below a certain noise level, as predicted by \Cref{thm:lownoise}.
  For comparison we also show the performance of the local optimization methods LMA and BFGS.
  We use again the initialization from~\cite{VanHuffel1994}.
  As before, we can only verify that the local methods are exact for instances for which SDP is exact.
\end{example}

\begin{table}[htb]
  \centering
  \caption{Approximate realization problem.}
  \label{tab:hankelnoisy}
  \begin{tabular}{c||*{6}{c}}
    \toprule
    noise & 0.0 & 0.1 & 0.2 & 0.3 & 0.4 & 0.5
    \\ \midrule
    SDP & 100\pct & 100\pct & 100\pct &  99\pct &  98\pct &  95\pct
    \\
    LMA & 100\pct & 100\pct &  99\pct &  86\pct &  67\pct &  54\pct
    \\
    BFGS& 100\pct & 100\pct &  99\pct &  86\pct &  67\pct &  54\pct
    \\ \bottomrule
  \end{tabular}
\end{table}

\begin{example}[Missing data]\label{ex:hankelmissing}
  Retake the approximate realization problem from above,
  and assume that some entries of the vector
  $\by = (y_1,y_2,\dots,y_{n+2})$ are missing.
  As mentioned in \Cref{thm:missing},
  we might apply our SDP by choosing a degenerate weight matrix.
  \Cref{tab:hankelmissing} shows the performance of the SDP for $n=40$,
  considering two scenarios for the missing entries.
  In the left of \Cref{tab:hankelmissing} we delete $y_i$ for all $i \equiv 3 \text{ or }5 \pmod 5$, i.e., for $i=3,5,8,\dots,38,40$.
  Our relaxation is almost as accurate as when all entries are observed (\Cref{tab:hankelnoisy}).
  In the right of \Cref{tab:hankelmissing} we delete most of the entries,
  only keeping $y_i$ when $i \equiv 1 \text{ or } 2 \pmod {10}$.
  This setting is quite complicated, due to the long gaps in between observed entries.
  Nonetheless, our relaxation still behaves remarkably well.
  For comparison, we also show the performance of LMA and BFGS.
  For the initialization we first complete the data using a cubic spline interpolation
  (linear interpolation did not work well)
  and then use the algorithm from~\cite{VanHuffel1994}
\end{example}

\begin{table}[htb]
  \centering
  \caption{Approximate realization problem with missing data.}
  \label{tab:hankelmissing}
  \setlength{\tabcolsep}{3pt}
  \begin{tabular}{c||*{6}{c}||*{6}{c}}
    \multicolumn{1}{c}{}&\multicolumn{6}{c}{38\pct{} missing entries}&\multicolumn{6}{c}{76\pct{} missing entries}
    \\
    \toprule
    noise
    & 0.0 & 0.1 & 0.2 & 0.3 & 0.4 & 0.5
    & 0.0 & 0.1 & 0.2 & 0.3 & 0.4 & 0.5
    \\ \midrule
    SDP
    & 100\pct & 100\pct & 100\pct &  98\pct &  94\pct &  92\pct
    & 100\pct &  93\pct &  80\pct &  82\pct &  82\pct &  84\pct
    \\
    LMA
    & 100\pct &  11\pct &   0\pct &   0\pct &   0\pct &   0\pct
    & 100\pct &   1\pct &   0\pct &   0\pct &   0\pct &   0\pct
    \\
    BFGS
    & 100\pct & 100\pct &  92\pct &  68\pct &  52\pct &  45\pct
    & 100\pct &  38\pct &  24\pct &  20\pct &  16\pct &  17\pct
    \\ \bottomrule
  \end{tabular}
\end{table}

\subsection{Approximate GCD}

Let $d,n_1,n_2\in \NN$ and let
$\hat{f}\in\RR[t]_{n_1},\hat{g}\in \RR[t]_{n_2}$ be univariate polynomials of degrees $n_1,n_2$,
The degree-$d$ approximate GCD problem is:
\begin{align*}
  \min_{f\in\RR[t]_{n_1},\,g\in\RR[t]_{n_2}}\quad &\|f-\hat{f}\|^2 + \|g-\hat{g}\|^2,
  \quad \text{ such that }\quad
  \deg(\gcd(f,g)) \geq d,
\end{align*}
where $\|\cdot \|$ denotes the $\ell_2$-norm of the coefficient vector.
The approximate GCD problem can be restated as in~\eqref{eq:stls}.
Indeed, it is known that $\deg(\gcd(f,g))\geq d$ if and only if $\operatorname{Syl}_d(f,g)$ is rank deficient,
where $\operatorname{Syl}_d(f,g)$ is the $(k{-}2d)\times(k{-}d{-}1)$ Sylvester matrix
which is filled with the coefficients of $f,g$;
see e.g.,~\cite{Kaltofen2006}.

\begin{example}
  Consider the polynomials
  \begin{align*}
    f := (t^2 - 2) (t^4 + 2), \quad
    g := (t^2 - 2) (t^3-1), \quad
    \operatorname{gcd}(f,g)=t^2-2.
  \end{align*}
  We normalize the coefficients of $f,g$ and then corrupt them with Gaussian noise.
  \Cref{tab:gcd} illustrates the performance of SDP in computing the degree-$2$ approximate GCD.
  The numbers indicate the percentage of successful experiments over $200$ repetitions.
  Note that SDP solves all instances below a certain noise level.
  For comparison, we also show the performance of LMA and BFGS
  using the default initialization of \slra.
\end{example}

\begin{table}[htb]
  \centering
  \caption{Approximate GCD problem.}
  \label{tab:gcd}
  \begin{tabular}{c||*{6}{c}}
    \toprule
    noise & 0.0 & 0.05 & 0.1 & 0.15 & 0.2 & 0.25
    \\ \midrule
    SDP & 100\pct & 100\pct &  96\pct &  91\pct &  90\pct &  90\pct
    \\
    LMA & 100\pct & 100\pct &  88\pct &  71\pct &  60\pct &  60\pct
    \\
    BFGS& 100\pct & 100\pct &  88\pct &  72\pct &  60\pct &  60\pct
    \\ \bottomrule
  \end{tabular}
\end{table}

\subsection{Multi-view geometry}\label{s:vision}

The pinhole camera model~\cite[\S6]{Hartley2003} from computer vision
represents 3D points by vectors $\bx = (1, x_1, x_2, x_3)^T \in \RR^4$,
2D points (images) by vectors $\bu = (u_1, u_2)^T \in \RR^2$,
and cameras by matrices $P \in \RR^{3 \times 4}$.
The image of the 3D point~$\bx$ under the camera~$P$ is the following 2D point:
\begin{align*}
  \bu \,=\,  \Pi\, P\, \bx,
  \qquad\text{ where }\qquad
  \Pi: \RR^3 \to \RR^2,
  \quad
  (y_0,y_1,y_2)^T \mapsto (y_1/y_0,\, y_2/y_0)^T.
\end{align*}
Because of the above equation,
problems from multi-view geometry often involve ratios of linear terms.
The fractional programming problem from~\eqref{eq:fractional}
captures several of these problems~\cite{Kahl2008},
including the triangulation problem, the camera resection problem, and the homography estimation problem.

\begin{example}[Triangulation]\label{exmp:triangulation2}
  Given $\ell$~cameras $P_j \in \RR^{3\times 4}$ and noisy images $\btheta_j\in \RR^2$ of an unknown 3D point $\bx\in\RR^4$, the triangulation problem is
  \begin{align*}
    \min_{\bx\in\RR^4, \bu_j\in \RR^2}\quad &\sum_{j=1}^\ell\|\bu_j-\btheta_j\|^2,
    \quad\text{ such that }\quad
    \bu_j = \Pi P_j \bx\; \text{ for } j \in [\ell],
  \end{align*}
  This is a special case of~\eqref{eq:fractional} with $m\!=\!4$ and $k\!=\!2\ell$.
  Aholt, Agarwal, and Thomas proposed an SDP relaxation for this problem in~\cite{Aholt2012},
  and they showed that it is exact under low noise when the camera centers are not coplanar.
  The SDP from~\cite{Aholt2012} is smaller than ours:
  its PSD matrix is $(k{+}1)\times (k{+}1)$, as opposed to $4(k{+}1)\times 4(k{+}1)$ for ours.
  But, as we will see, ours is more precise, succeeding in almost all instances we considered.

  \Cref{tab:triangulation} compares both SDP relaxations, indicating the percentage of experiments that are solved exactly (the solution is rank-one) over $400$ repetitions.
  The left of \Cref{tab:triangulation} considers the synthetic data set described in~\cite{Aholt2012,Olsson2009}:
  the cameras are placed uniformly at random on the sphere of radius two pointing to the origin, and the points are generated uniformly at random inside the unit cube.
  For the right of \Cref{tab:triangulation} the cameras are placed uniformly on the line segment $(2,0,0)$---$(2,0,1)$, and the points are as before.
  In both cases the image size is approximately $2\times 2$ units.
  The second configuration is very problematic for the SDP from~\cite{Aholt2012}, since the camera centers are coplanar.
  On the other hand, our SDP~\eqref{eq:sdp} behaves equally well.
\end{example}

\begin{table}[htb]
  \centering
  \caption{Camera triangulation problem.}
  \label{tab:triangulation}
  \setlength{\tabcolsep}{3pt}
  \begin{tabular}{cc||*{6}{c}||*{6}{c}}
    \multicolumn{2}{c}{}&\multicolumn{6}{c}{cameras on a sphere}&\multicolumn{6}{c}{cameras on a line}
    \\
    \toprule
    &noise
    & 0.0 & 0.1 & 0.2 & 0.3 & 0.4 & 0.5
    & 0.0 & 0.1 & 0.2 & 0.3 & 0.4 & 0.5
    \\ \midrule
    \multirow{2}{*}{$\ell=3$}
    &our SDP
    &100\pct &100\pct & 100\pct & 100\pct & 100\pct & 99\pct
    &100\pct &100\pct & 100\pct & 100\pct & 100\pct & 100\pct
    \\
    &AAT~\cite{Aholt2012}
    &100\pct &93\pct & 87\pct & 81\pct & 74\pct & 68\pct
    &100\pct &0\pct & 0\pct & 0\pct & 0\pct & 0\pct
    \\
    \multirow{2}{*}{$\ell=7$}
    &our SDP
    &100\pct &100\pct & 100\pct & 100\pct & 100\pct & 100\pct
    &100\pct &100\pct & 100\pct & 100\pct & 100\pct & 99\pct
    \\
    &AAT~\cite{Aholt2012}
    &100\pct &100\pct & 99\pct & 96\pct & 88\pct & 79\pct
    &100\pct &0\pct & 0\pct & 0\pct & 0\pct & 0\pct
    \\ \bottomrule
  \end{tabular}
\end{table}

\begin{example}[Resectioning]
  Given $\ell$~points $\bx_j\in \RR^4$ and noisy images $\btheta_j\in\RR^2$ under an unknown projective camera $P \in \RR^{3\times 4}$, the resectioning problem is
  \begin{align*}
    \min_{P\in\RR^{3\times 4}, \bu_j\in\RR^2}\quad &\sum_{j=1}^\ell\|\bu_j-\btheta_j\|^2,
    \quad \text{ such that }\quad
    \bu_j = \Pi P \bx_j\; \text{ for } j \in [\ell].
  \end{align*}
  This is a special case of~\eqref{eq:fractional} with $m\!=\!12$ and $k\!=\!2\ell$.
  \Cref{tab:resectioning} illustrates the performance of our SDP~\eqref{eq:sdp} on the synthetic data set from~\cite{Olsson2009}:
  the cameras are randomly placed on the sphere of radius two,
  and the points inside the unit cube.
  Our SDP is exact for most instances below a noise level of~$0.1$.
  For comparison, we refer to~\cite[Fig~2]{Olsson2009}, where it is observed that their method cannot certify global optimality for any instance above a noise level of~$0.03$.

  Kahl and Henrion proposed an SDP relaxation for this problem in~\cite{Kahl2007}, though without any guarantees.
  \Cref{tab:resectioning} also shows the performance of their SDP (the Schur formulation of order~$3$).
  We use the heuristic method explained in~\cite{Kahl2007} of including a small multiple of the trace in the objective function ($\epsilon=0.001$).
  This promotes a rank-one solution, at the expense of a loss in optimality.
  As shown in \Cref{tab:resectioning}, the solution still had higher rank in most of the experiments performed, even with this heuristic.
\end{example}

\begin{table}[htb]
  \centering
  \caption{Uncalibrated camera resectioning.}
  \label{tab:resectioning}
  \begin{tabular}{cc||*{6}{c}}
    \toprule
    & noise & 0.0 & 0.05 & 0.1 & 0.15 & 0.2 & 0.25
    \\ \midrule
    \multirow{2}{*}{$\ell=6$}
    & our SDP
    &100\pct &82\pct & 80\pct & 80\pct & 83\pct & 84\pct
    \\
    & KH~\cite{Kahl2007}
    &100\pct &26\pct & 7\pct & 4\pct & 2\pct & 2\pct
    \\
    \multirow{2}{*}{$\ell=15$}
    & our SDP  &100\pct &100\pct & 85\pct & 59\pct & 37\pct & 25\pct
    \\
    & KH~\cite{Kahl2007}
    &100\pct &0\pct & 0\pct & 0\pct & 0\pct & 0\pct
    \\ \bottomrule
  \end{tabular}
\end{table}

\begin{remark}
  The homography estimation problem is also a special instance of~\eqref{eq:fractional},
  so we may use our SDP relaxation.
  Another relaxation was proposed in~\cite{Kahl2007}, but with no guarantees.
  A related problem is the estimation of the essential matrix of two views,
  for which an SDP relaxation that is exact under low noise was recently proposed~\cite{Zhao2019}.
\end{remark}

\section{Future work}

\subsubsection*{Theory}

Our SDP always recovers the global minimizer of~\eqref{eq:stls} in the low noise regime, as shown in \Cref{thm:lownoise}.
More generally, the theorem applies to weighted $\ell_2$-norms of the form
$\|\bv\|_W = \sqrt{\bv^T W \bv}$ with $W \!\succ\! 0$.
We can also use our relaxation for seminorms defined by a weight matrix $W \!\succeq\! 0$,
but our proof does not generalize.
Nonetheless, the experiments in \Cref{tab:hankelmissing} suggest that \Cref{thm:lownoise} might still hold in some cases.
Investigating this behavior is an interesting open problem.

Our experiments reveal that our SDP relaxation is quite resilient to noise,
outperforming state of the art methods.
It would be desirable to establish quantitative lower bounds on the noise tolerance.
The stability theory developed in~\cite{Cifuentes2017stability} includes lower bounds,
but preliminary calculations indicate they are overly pessimistic.
Deriving practical lower bounds on the noise tolerance is left for future work.

It is natural to ask whether our SDP relaxation can be extended to the general SLRA problem
i.e., when $\mathscr{S}(\bu)$ satisfies a rank~$r$ constraint.
We may derive an SDP relaxation for SLRA following similar steps as in Section~3
(start with the kernel formulation, rewrite in terms of the Kronecker product, and derive the Shor relaxation).
But this relaxation is never exact when $r \!<\! m{-}1$.
It is an interesting problem to derive a convex relaxation for SLRA with provable guarantees.

\subsubsection*{Computation}

SDPs can be solved in polynomial time with interior point methods,
but their high memory requirements limit their use in large scale problems.
Scalability is a pervasive issue phased by SDPs,
but it has been overcome in several other applications,
e.g., \cite{Yang2020,Rosen2019,Yurtsever2019,Cosse2017,Majumdar2019}.
Developing an scalable solver for our~SDP relaxation is a very important open problem.

There are several known techniques for improving efficiency,
see the survey paper \cite{Majumdar2019}.
Taking advantage of problem structure is critical 
(e.g., sparsity~\cite{Vandenberghe2014}, symmetry~\cite{Permenter2017}, geometry~\cite{Cifuentes2015}).
One often also relies on first order methods instead of interior point methods.
Two concrete examples are the Burer-Monteiro method~\cite{Burer2003}
and the sketching technique from~\cite{Yurtsever2019}.
The PSD matrix in our relaxation has size $(k{+}1)m$,
which can be problematic when both $m,k$ are large.
This is due to our Kronecker product formulation
(this corresponds to the second level of the SOS hierarchy).
Some ways to tackle this issue are proposed in~\cite{Cosse2017,Papp2019}.

We point out that our SDP relaxation might also be used in conjunction with local optimization methods
in order to provide a posteriori verification of candidate solutions.
This approach has been explored in e.g.~\cite{Yang2020,Iguchi2017},
and is often significantly more efficient than solving the SDP by itself.

\bigskip
\subsection*{Acknowledgments}

The author thanks Sameer Agarwal, Pablo Parrilo, Bernd Sturmfels, Rekha Thomas, and Andr\'e Uschmajew for helpful discussions and comments.
This work was partially done while the author was in the Max Planck Institute in Leipzig.

\bibliographystyle{abbrv}
\bibliography{refs}

\begin{thebibliography}{10}

\bibitem{Aholt2012}
C.~Aholt, S.~Agarwal, and R.~Thomas.
\newblock A {QCQP} approach to triangulation.
\newblock In {\em ECCV (1)}, volume 7572 of {\em Lect. Notes Comput. Sci.},
  pages 654--667. Springer, 2012.

\bibitem{Andersson2013}
F.~Andersson and M.~Carlsson.
\newblock Alternating projections on nontangential manifolds.
\newblock {\em Constr. Approximation}, 38(3):489--525, 2013.

\bibitem{Bazaraa2013}
M.~S. Bazaraa, H.~D. Sherali, and C.~M. Shetty.
\newblock {\em Nonlinear programming: theory and algorithms}.
\newblock John Wiley \& Sons, 2013.

\bibitem{BenTal2001}
A.~Ben-Tal and A.~Nemirovski.
\newblock {\em Lectures on modern convex optimization: analysis, algorithms,
  and engineering applications}, volume~2.
\newblock SIAM, 2001.

\bibitem{Blekherman2013}
G.~Blekherman, P.~Parrilo, and R.~Thomas.
\newblock {\em Semidefinite optimization and convex algebraic geometry},
  volume~13.
\newblock MOS-SIAM Series Optim., 2013.

\bibitem{Burer2003}
S.~Burer and R.~D. Monteiro.
\newblock A nonlinear programming algorithm for solving semidefinite programs
  via low-rank factorization.
\newblock {\em Math. Program.}, 95(2):329--357, 2003.

\bibitem{Candes2009}
E.~Cand{\`e}s and B.~Recht.
\newblock Exact matrix completion via convex optimization.
\newblock {\em Found. Comput. Math.}, 9(6):717, 2009.

\bibitem{Carlsson2019}
M.~Carlsson.
\newblock On convex envelopes and regularization of non-convex functionals
  without moving global minima.
\newblock {\em J. Optim. Theory Appl.}, 183(1):66--84, 2019.

\bibitem{Chu2003}
M.~Chu, R.~Funderlic, and R.~Plemmons.
\newblock Structured low rank approximation.
\newblock {\em Linear Algebra Appl.}, 366:157--172, 2003.

\bibitem{Cifuentes2017stability}
D.~Cifuentes, S.~Agarwal, P.~Parrilo, and R.~Thomas.
\newblock On the local stability of semidefinite relaxations.
\newblock {\em arXiv:1710.04287}, 2017.

\bibitem{Cifuentes2015}
D.~Cifuentes and P.~Parrilo.
\newblock Sampling algebraic varieties for sum of squares programs.
\newblock {\em SIAM J. Optim}, 27(4):2381--2404, 2017.

\bibitem{Cosse2017}
A.~Cosse and L.~Demanet.
\newblock Stable rank one matrix completion is solved by two rounds of
  semidefinite programming relaxation.
\newblock {\em arXiv:1801.00368}, 2017.

\bibitem{DeMoor1993}
B.~De~Moor.
\newblock Structured total least squares and {L2} approximation problems.
\newblock {\em Linear Algebra Appl.}, 188:163--205, 1993.

\bibitem{Eisenbud2013}
D.~Eisenbud.
\newblock {\em Commutative algebra: with a view toward algebraic geometry},
  volume 150.
\newblock Springer Science \& Business Media, 2013.

\bibitem{Fazel2002}
M.~Fazel.
\newblock {\em Matrix rank minimization with applications}.
\newblock PhD thesis, Stanford University, 2002.

\bibitem{cvx}
M.~Grant and S.~Boyd.
\newblock {CVX}: Matlab software for disciplined convex programming, version
  2.1.
\newblock \url{http://cvxr.com/cvx}, Mar. 2014.

\bibitem{Hartley2003}
R.~Hartley and A.~Zisserman.
\newblock {\em Multiple view geometry in computer vision}.
\newblock Cambridge University Press, 2003.

\bibitem{Iguchi2017}
T.~Iguchi, D.~Mixon, J.~Peterson, and S.~Villar.
\newblock Probably certifiably correct $k$-means clustering.
\newblock {\em Math. Prog.}, 165(2):605--642, 2017.

\bibitem{Jain2010}
P.~Jain, R.~Meka, and I.~Dhillon.
\newblock Guaranteed rank minimization via singular value projection.
\newblock In {\em Adv. Neural Inf. Process. Syst.}, pages 937--945, 2010.

\bibitem{Kahl2008}
F.~Kahl, S.~Agarwal, M.~Chandraker, D.~Kriegman, and S.~Belongie.
\newblock Practical global optimization for multiview geometry.
\newblock {\em Int. J. Comput. Vision}, 79(3):271--284, 2008.

\bibitem{Kahl2007}
F.~Kahl and D.~Henrion.
\newblock Globally optimal estimates for geometric reconstruction problems.
\newblock {\em Int. J. Comput. Vision}, 74(1):3--15, 2007.

\bibitem{Kaltofen2006}
E.~Kaltofen, Z.~Yang, and L.~Zhi.
\newblock Approximate greatest common divisors of several polynomials with
  linearly constrained coefficients and singular polynomials.
\newblock In {\em Proc. Int. Symp. Symbolic Algebraic Comput. (ISSAC)}, pages
  169--176, 2006.

\bibitem{Larsson2016}
V.~Larsson and C.~Olsson.
\newblock Convex low rank approximation.
\newblock {\em Int. J. Comput. Vision}, 120(2):194--214, 2016.

\bibitem{Lemmerling1999}
P.~Lemmerling.
\newblock {\em Structured total least squares: analysis, algorithms and
  applications}.
\newblock PhD thesis, Katholieke Universiteit Leuven, 1999.

\bibitem{Lemmerling2001}
P.~Lemmerling and S.~Van~Huffel.
\newblock Analysis of the structured total least squares problem for
  {H}ankel/{T}oeplitz matrices.
\newblock {\em Numer. algorithms}, 27(1):89--114, 2001.

\bibitem{Majumdar2019}
A.~Majumdar, G.~Hall, and A.~Ahmadi.
\newblock Recent scalability improvements for semidefinite programming with
  applications in machine learning, control, and robotics.
\newblock {\em Annu. Rev. Control Rob. Auton. Syst.}, 3, 2019.

\bibitem{Markovsky2008}
I.~Markovsky.
\newblock Structured low-rank approximation and its applications.
\newblock {\em Automatica}, 44(4):891--909, 2008.

\bibitem{Markovsky2011}
I.~Markovsky.
\newblock {\em Low rank approximation: algorithms, implementation,
  applications}.
\newblock Springer Science \& Business Media, 2011.

\bibitem{slra}
I.~Markovsky and K.~Usevich.
\newblock Software for weighted structured low-rank approximation.
\newblock {\em J. Comput. Appl. Math.}, 256:278--292, 2014.

\bibitem{Markovsky2007}
I.~Markovsky and S.~Van~Huffel.
\newblock Overview of total least-squares methods.
\newblock {\em Signal Process.}, 87(10):2283--2302, 2007.

\bibitem{Olsson2009}
C.~Olsson, F.~Kahl, and R.~Hartley.
\newblock Projective least-squares: Global solutions with local optimization.
\newblock In {\em Conf. Comput. Vision Pattern Recognit.}, pages 1216--1223,
  2009.

\bibitem{Ottaviani2014}
G.~Ottaviani, P.-J. Spaenlehauer, and B.~Sturmfels.
\newblock Exact solutions in structured low-rank approximation.
\newblock {\em SIAM J. Matrix Anal. Appl.}, 35(4):1521--1542, 2014.

\bibitem{Papp2019}
D.~Papp and S.~Yildiz.
\newblock Sum-of-squares optimization without semidefinite programming.
\newblock {\em SIAM J. Optim.}, 29(1):822--851, 2019.

\bibitem{Permenter2017}
F.~Permenter.
\newblock {\em Reduction methods in semidefinite and conic optimization}.
\newblock PhD thesis, Massachusetts Institute of Technology, 2017.

\bibitem{Recht2010}
B.~Recht, M.~Fazel, and P.~Parrilo.
\newblock Guaranteed minimum-rank solutions of linear matrix equations via
  nuclear norm minimization.
\newblock {\em SIAM Rev.}, 52(3):471--501, 2010.

\bibitem{Rosen2019}
D.~Rosen, L.~Carlone, A.~Bandeira, and J.~Leonard.
\newblock {SE}-{S}ync: {A} certifiably correct algorithm for synchronization
  over the special {E}uclidean group.
\newblock {\em Int. J. Rob. Res.}, 38(2-3):95--125, 2019.

\bibitem{Rosen1996}
J.~Rosen, H.~Park, and J.~Glick.
\newblock Total least norm formulation and solution for structured problems.
\newblock {\em SIAM J. Matrix Anal. Appl.}, 17(1):110--126, 1996.

\bibitem{Schost2016}
{\'E}.~Schost and P.-J. Spaenlehauer.
\newblock A quadratically convergent algorithm for structured low-rank
  approximation.
\newblock {\em Found. Comput. Math.}, 16(2):457--492, 2016.

\bibitem{VanHuffel1994}
S.~Van~Huffel, H.~Chen, C.~Decanniere, and P.~Vanhecke.
\newblock Algorithm for time-domain {NMR} data fitting based on total least
  squares.
\newblock {\em J. Magn. Reson., Ser. A}, 110(2):228--237, 1994.

\bibitem{Vandenberghe2014}
L.~Vandenberghe and M.~Andersen.
\newblock Chordal graphs and semidefinite optimization.
\newblock {\em Found. Trends Optim.}, 1(4):241--433, 2014.

\bibitem{Yang2020}
H.~Yang, J.~Shi, and L.~Carlone.
\newblock {TEASER}: {F}ast and certifiable point cloud registration.
\newblock {\em arXiv:2001.07715}, 2020.

\bibitem{Yurtsever2019}
A.~Yurtsever, J.~A. Tropp, O.~Fercoq, M.~Udell, and V.~Cevher.
\newblock Scalable semidefinite programming.
\newblock {\em arXiv:1912.02949}, 2019.

\bibitem{Zhao2019}
J.~Zhao.
\newblock An efficient solution to non-minimal case essential matrix
  estimation.
\newblock {\em arXiv:1903.09067}, 2019.

\end{thebibliography}

\end{document}